\documentclass[12pt,reqno]{amsart}

\usepackage[margin=2.5cm]{geometry} 

\usepackage{graphicx} 
\usepackage[utf8]{inputenc}
\usepackage[english]{babel}
\usepackage[T1]{fontenc}
\usepackage{amssymb}
\usepackage{mathtools}
\usepackage{amsfonts}
\usepackage{tikz}
\usepackage{tikz-cd}
\usepackage{amsmath}
\usepackage{amsthm}
\usepackage{mathrsfs}
\usepackage{changepage}
\graphicspath{{images/}}
\usepackage{fancyhdr}
\usepackage{csquotes}

\usepackage[backend=biber, sortcites=true, maxbibnames=50]{biblatex}
\usepackage{hyperref}
\hypersetup{
	colorlinks=true,
	linkcolor=blue,
	citecolor=blue,
	urlcolor=blue,
	pdftitle={}
}

\DeclareMathOperator{\diam}{diam}

\newtheorem{theorem}{Theorem}  
\newtheorem{cor}[theorem]{Corollary}  

\theoremstyle{plain}
\newtheorem{thm}{Theorem}
\newtheorem{lemma}[thm]{Lemma}
\newtheorem{corollary}[thm]{Corollary}

\theoremstyle{definition}
\newtheorem{definition}[thm]{Definition}

\newtheorem{remark}[thm]{Remark}
\newtheorem{exmp}[thm]{Example}

\numberwithin{equation}{section}
\numberwithin{thm}{section}

\usepackage{setspace}

\theoremstyle{remark} 
\newtheorem*{ack}{Acknowledgements}
\title{Spaces with multiple conical singularities and their stratifications}
\author{Mohammad Alattar and Lewis Tadman}
\address[Alattar]{Department of Mathematical Sciences, Durham University, United Kingdom}
\email{{mohammad.alattar@durham.ac.uk}}
\address[Tadman]{Department of Mathematical Sciences, Durham University, United Kingdom}
\email{lewis.tadman@durham.ac.uk}
\date{\today}
\begin{document}
\begin{abstract}
    In this paper we further develop the theory of MCS spaces. Our main result shows that MCS spaces, as defined by Perelman, are CS sets with respect to their MCS stratification, and that in fact, the intrinsic stratification agrees with the MCS stratification. As a consequence, we improve on Perelman's result, and settle a question by Fujioka. Furthermore, among other results, we also provide an alternative and short proof to a conjecture by Petersen in all even dimensions.
\end{abstract} 
\maketitle

\section{Introduction}
In the 1970's, Siebenmann \cite{siebenmann1972deformation} introduced two classes of stratified spaces, CS sets (see Definition \ref{Definition: CS sets}) and WCS sets. These are, respectively, locally and weakly cone-like spaces that can be stratified by manifolds. The main difference, roughly speaking, between CS and WCS sets is that CS sets are stratified spaces where the local cone charts are witnessed by homeomorphisms that preserve the strata with respect to some stratification on the link of the cone factor. On the other hand, WCS sets are stratified spaces where the local cone charts preserve only the bottom stratum (strictly speaking, the definition of WCS set allows for more general spaces than cones. These spaces are called mock open cones and will not concern us). CS sets are WCS, and examples of CS sets abound. For instance, all topological manifolds,  all locally finite simplicial complexes and polyhedra are examples of CS sets.

In \cite{perelman1991alexandrov}, Perelman introduced a general class of stratified spaces termed MCS (see Definition \ref{MCS space}). These spaces are, by definition, spaces with multiple conical singularities. Perelman showed that these spaces are examples of WCS sets. Then using the structure of finite-dimensional Alexandrov spaces with curvature bounded below, he showed that Alexandrov spaces are MCS. Combining the geometric structure with the WCS structure, Perelman established his stability theorem \cite{perelman1991alexandrov}. Recently, Fujioka improved on Perelman's result, and among other interesting results, showed that Alexandrov spaces \textit{with the extremal set stratification}, as defined by Perelman--Petrunin \cite{Perelman-Petrunin-93}, are CS sets \cite{fujioka2024alexandrov}. Fujioka inquired whether, for Alexandrov spaces, their MCS stratification differs from the intrinsic stratification and even if so, whether the MCS stratification is CS \cite[Question 3.3]{fujioka2024alexandrov}. Note that, as observed in \cite[Example 2.26]{fujioka2024alexandrov}, the extremal set stratification does not coincide with the MCS stratification, as the double suspension of the Poincaré homology sphere readily shows. 

Our main theorem states that all MCS spaces are CS with their MCS stratification, and in fact, the intrinsic stratification agrees with the MCS stratification, thereby improving on Perelman's result, and answering  Fujioka's question for the more general class of MCS spaces. Note that every Alexandrov space is an MCS space, but not every MCS space is an Alexandrov space.

\begin{theorem}
\label{thm:main.theorem}
Every MCS space with the MCS stratification is a CS set. In fact, the MCS stratification coincides with the intrinsic stratification.

\end{theorem}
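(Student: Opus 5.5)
Recall Perelman's definition: $X$ is MCS if every point $x$ has a neighbourhood homeomorphic to an open cone $C(\Sigma_x)$ over a compact MCS space $\Sigma_x$ of one lower dimension, with $x$ sent to the vertex; the empty set is the $(-1)$-dimensional MCS space. The MCS stratification puts $x$ in the $k$-dimensional stratum when $k$ is the largest integer for which $x$ has a conical neighbourhood of the form $\mathbb{R}^k\times C(\Sigma)$ with $\Sigma$ a compact MCS space and $x$ corresponding to $(0,\text{vertex})$. Since $C(S^{m-1})\cong\mathbb{R}^m$, we may split off Euclidean factors this way.

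The proof is by induction on $n=\dim X$. The inductive hypothesis is that every MCS space of dimension $<n$ is a CS set with respect to its MCS stratification, and that this stratification equals the intrinsic one. The case $n=0$ is trivial, since then $X$ is discrete.

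\textbf{Step 1 (strata are manifolds and the local charts respect strata).} Fix $x$ in the $k$-stratum with a chart $U\cong\mathbb{R}^k\times C(\Sigma)$. The key claim is that the chart carries the MCS stratification of $U$ to the product of $\mathbb{R}^k$ with the cone stratification of $C(\Sigma)$. That cone stratification consists of the vertex together with the sets $C(\Sigma^j)\setminus\{\text{vertex}\}$, where $\Sigma^j$ denotes the MCS strata of $\Sigma$. Precisely, for a point $(t,r,\sigma)$ with $r>0$ and $\sigma\in\Sigma^j$, the MCS depth should equal $k+1+j$. The lower bound is easy. A conical neighbourhood $\mathbb{R}^j\times C(\Lambda)$ of $\sigma$ in $\Sigma$ gives $\mathbb{R}^k\times(0,\infty)\times\mathbb{R}^j\times C(\Lambda)$ near the point. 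The upper bound, and the fact that the vertex points $\mathbb{R}^k\times\{\text{vertex}\}$ have depth exactly $k$, require invariance of the "Euclidean-splitting number" under homeomorphism. This is where the intrinsic stratification enters.

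\textbf{Step 2 (the intrinsic stratification; main obstacle).} Define the intrinsic (coarsest CS) stratification as in King/Siebenmann: $x\sim y$ when they have homeomorphic neighbourhoods by a homeomorphism sending $x$ to $y$, with the refinement by local homology or cone-type invariants. I will show by induction that $x$ lies in the $k$-stratum of the MCS stratification if and only if $k$ is the largest integer for which $x$ admits a neighbourhood $\mathbb{R}^k\times C(L)$ with $L$ \emph{any} compact space. This is exactly the intrinsic depth.

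The nontrivial direction is the following. Suppose $x$ has a neighbourhood $\mathbb{R}^{k'}\times C(L)$ with $k'>k$, and also the MCS chart $\mathbb{R}^k\times C(\Sigma)$. Then $C(\Sigma)\times\mathbb{R}^{k}\cong\mathbb{R}^{k'}\times C(L)$ near the base point, so $\Sigma$ is locally $\mathbb{R}^{k'-k}$-split at the vertex. Using the standard suspension trick, $C(\Sigma)\cong\mathbb{R}\times C(L')$ at the vertex implies that the vertex has a neighbourhood in $C(\Sigma)$ homeomorphic to a neighbourhood splitting $\mathbb{R}$. Siebenmann's local-cone uniqueness results for CS sets then apply: if the link is a CS set, cone-type is determined up to stable ($\times\mathbb{R}$) homeomorphism. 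Here the link $\Sigma$ is a CS set by induction. From this we get that $\Sigma$ is homeomorphic to a suspension $S^0 * \Sigma'$ with $\Sigma'$ an MCS space, or more precisely that $\Sigma\times\mathbb{R}$ contains a point of greater MCS depth. This would contradict maximality of $k$. I expect this step to be the hardest, because MCS charts are not unique and one must show that the MCS depth is a topological invariant. My first attempt would use Siebenmann's theorem that in a CS set, homeomorphisms of cone neighbourhoods can be isotoped to preserve strata, applied to the $(n-1)$-dimensional CS set $\Sigma$ given by induction.

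\textbf{Step 3 (conclusion).} With Step 2 in hand, depth is a local topological invariant. Hence each MCS stratum $X_k\setminus X_{k-1}$ is locally $\mathbb{R}^k$ (the $\mathbb{R}^k$ factor of the chart), so it is a $k$-manifold, and each $X_k$ is closed. Step 1 shows that the chart $\mathbb{R}^k\times C(\Sigma)$ is stratum-preserving, where $\Sigma$ carries its MCS stratification, which is CS by induction. Therefore $X$ is a CS set. Since the MCS stratification agrees pointwise with the intrinsic depth, the two stratifications coincide.
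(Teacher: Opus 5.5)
There is a genuine gap in the key claim of Step 1, and the CS conclusion in Step 3 rests on it. The depth of an off-spine point $(t,r,\sigma)$, $r>0$, is \emph{not} determined by the MCS depth of $\sigma$ in $\Sigma$. Only your lower bound $k+1+j$ holds, because $\mathbb{R}\times\Sigma$ can split off more Euclidean factors at $(s,\sigma)$ than $1+\mathfrak{I}(\Sigma,\sigma)$; this is the paper's $\mathrm{Susp}(P)$ example. Concretely, let $P$ be the Poincaré sphere, $\Sigma=\mathrm{Susp}(P)\sqcup\mathrm{Susp}(P)$, and $X=C(\Sigma)$. Then $X$ is two copies of $C(\mathrm{Susp}(P))\cong\mathbb{R}^5$ (double suspension theorem) glued at their vertices. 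The vertex $o$ is the only non-manifold point, so translating along a putative Euclidean factor shows its depth is $k=0$, and $C(\Sigma)$ is a depth-realizing chart. A suspension point $N\in\Sigma$ has depth $0$ in $\Sigma$, since its neighbourhood $C(P)$ is not a manifold. Your formula therefore assigns depth $1$ to the rays $(r,N)$, but these are manifold points of depth $5$. So the MCS stratification of $X$ (with $X^{(0)}=\dots=X^{(4)}=\{o\}$ and $X^{(5)}=X$) is not the cone on the MCS stratification of $\Sigma$. No inductive information about $\Sigma$'s own stratification can repair this.

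The missing idea is to give the link the stratification \emph{induced from the ambient depth}: $\Sigma[j]=\{\sigma : \text{the depth of }(t,r,\sigma)\text{ is }k+1+j\}$. This is independent of $t\in\mathbb{R}^k$ and $r>0$, because translations and radial rescalings are self-homeomorphisms of the chart. We have $\Sigma[j]=\varnothing$ for $j<0$ by your lower bound. The filters $\bigcup_{s\le n}\Sigma[s]$ are closed because the sets $X^{(i)}$ are closed, which again comes from the lower bound: depth does not drop near a point. With this choice the chart is a stratified isomorphism by construction. The definition of a CS set does not require the link stratification to be CS or intrinsic to the link; in the example above it is the one-stratum stratification of the non-manifold $\Sigma$.

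Step 2 misplaces the difficulty, and the route sketched there does not close. MCS depth is defined by the existence of a pointed homeomorphism onto some $\mathbb{R}^k\times C(\Sigma)$ with $\Sigma$ compact MCS, so it is a topological invariant by definition; no cone-uniqueness theory is needed. In a depth-realizing chart, spine points are pointed-homeomorphic to $x$ by translation, so they have depth exactly $k$, while off-spine points have depth $\ge k+1$. Hence the $k$-stratum meets the chart exactly in $\mathbb{R}^k\times\{o\}$, which gives manifold strata and a closed filtration without Step 2.

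Agreement with the intrinsic depth is then a dimension count. Suppose $x$ has a neighbourhood $\mathbb{R}^{k'}\times C(L)$ with $L$ an arbitrary compactum. Translations show every point of $\mathbb{R}^{k'}\times\{o\}$ has MCS depth $k$, so $\mathbb{R}^{k'}$ embeds in the $k$-manifold $X[k]$, forcing $k'\le k$.

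Your argument instead has three problems:
\begin{enumerate}
\item It cancels the $\mathbb{R}^k$ factor to make $C(\Sigma)$ split off $\mathbb{R}^{k'-k}$. This is not a valid principle: $\mathbb{R}\times C(P)\cong\mathbb{R}^5$, yet $C(P)$ splits off no line at its vertex.
\item Stable cone uniqueness only gives $\Sigma\times\mathbb{R}\cong\mathrm{Susp}(L')\times\mathbb{R}$, not a suspension structure on $\Sigma$.
\item A point of larger depth in $\Sigma\times\mathbb{R}$ does not contradict the maximality of $k$ at $x$.
\end{enumerate}
Once these two fixes are made, the induction on dimension is unnecessary.
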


Note that the constructed stratification of the links in the CS stratification of the MCS space is not claimed to be a CS stratification. However, the links do admit a CS stratification (cf. Remark \ref{links-need-not-be-cs}). In general, the strata of the links will always be resolvable homology manifolds (see Remark \ref{Cor: Links are resolvable homology manifolds}). Our main tool is a quantity $\mathfrak{I}(X,x)$ which turns out to be the defining quantity for the MCS strata of an MCS space. It is analogous to Armstrong and Handel's notion of an intrinsic dimension $I(X,x)$ \cite{armstrong,handel}. However, in general, for a locally cone-like, finite dimensional space $X$, $\mathfrak{I}(X,x)$ need not coincide with $I(X,x)$ (see Example \ref{invariants-don't-agree}).

Note that as a consequence, since Alexandrov spaces are MCS, we recover \cite[Corollary 3.2]{fujioka2024alexandrov}. Namely, every Alexandrov space with the intrinsic stratification is CS.

We would like to point out the following. If one wishes to prove that the MCS stratification coincides with the intrinsic stratification, then one might be tempted to prove an implication of the form: If $\mathbb{R}^m\times cL$ is MCS  ($L$ is a finite dimensional compactum) then $L$ is MCS. However, Lemma \ref{pathology} shows that this implication is generally untrue. In fact, the situation is worse: $L$ may not even admit a CS stratification. Example \ref{GH-limits} shows that such spaces can arise as Gromov--Hausdorff limits of compact manifolds with a uniform contractibility function. One of the main ideas of the proof of the preceding Theorem also establishes that the link associated to points living in the intrinsic $i$-th stratum of an MCS space can be replaced by a link that is an MCS space (see Link Replacement Lemma, Lemma \ref{replacement lemma}).

The theorem above doesn't extend to spaces for which every point admits a conical neighbourhood (where the link is compact). More precisely, for $m\geq 5$, let $M^m$ be a $m$-manifold with a tame end $\epsilon$. Then, every point in $(M\cup \epsilon)\times \mathbb{R}$ has a neighbourhood (pointed) homeomorphic to the cone over a compactum. The manifold $M$ can be chosen so that $(M\cup \epsilon)\times \mathbb{R}$ does not admit a CS stratification, as shown by Handel \cite[Proposition 3.2]{handel}. This space cannot be an MCS space.

The next corollary shows that Alexandrov spaces have a special tangent microbundle structure along their MCS strata.

\begin{cor}
Let $X$ be a MCS space, $C$ be any component of any MCS stratum, and $y\in C$, and $L$ a link of $y$. Then the canonical diagram $C\rightarrow  C\times X\rightarrow C$ yields a $c(\mathbb{S}^{n-1}\ast L)$ microbundle, where $n$ is the dimension of the stratum. In particular, the map $C\rightarrow C\times X$ is the diagonal map and the map $C\times X\rightarrow C$ is the projection map.
\end{cor}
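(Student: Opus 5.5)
The plan is to verify the microbundle axioms directly: exhibit, for each $y\in C$, an open neighbourhood $U$ of $y$ in $C$ and an open neighbourhood $V$ of $(y,y)$ in $C\times X$ with $\Delta(U)\subset V$, $\pi(V)\subset U$, together with a homeomorphism $\phi\colon V\to U\times c(\mathbb{S}^{n-1}\ast L)$ that commutes with the diagonal (sending $(z,z)$ to $(z,\text{cone point})$) and with the projections. Here $\Delta$ is the diagonal and $\pi$ the first-factor projection. The input is Theorem \ref{thm:main.theorem}. Since the MCS stratification is a CS stratification, $y$ has an open neighbourhood $W\subset X$ and a stratum-preserving homeomorphism $h\colon W\to \mathbb{R}^n\times cL$ with $h(y)=(0,\ast)$. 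Because the strata are intrinsic, $h^{-1}(\mathbb{R}^n\times\{\ast\})=W\cap C$ after shrinking $W$, using that the stratum of dimension $n$ near $y$ is exactly the cone-point locus. Also $c(\mathbb{S}^{n-1}\ast L)\cong \mathbb{R}^n\times cL$, via the standard identification of the join cone with the product of cones, $c(A\ast B)\cong cA\times cB$, applied with $c\mathbb{S}^{n-1}\cong\mathbb{R}^n$. The cone point corresponds to $(0,\ast)$.

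The construction then follows the standard argument that a topological manifold has a trivial tangent microbundle locally, done here in the normal direction too.
\begin{enumerate}
\item Let $U=h^{-1}(B\times\{\ast\})\subset C$, where $B$ is a small open ball about $0$. Identify $U$ with $B$ via $h$.
\item For $z\in U$ with $h(z)=(b,\ast)$, translate in the $\mathbb{R}^n$ factor. Define $\phi(z,x)=(z,\,\tau_{-b}(h(x)))$ for $x\in W$, where $\tau_{-b}(v,w)=(v-b,w)$.
\item Set $V=\{(z,x): z\in U,\ x\in W\}$. This is open in $C\times X$ and contains $\Delta(U)$.
\item The map $\phi\colon V\to U\times(\mathbb{R}^n\times cL)$ is a homeomorphism, with inverse $(z,(v,w))\mapsto (z,h^{-1}(v+b_z,w))$. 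Continuity in $z$ holds because $b_z=p_1h(z)$ is continuous.
\item Clearly $\pi\circ\phi^{-1}=\pi$, and $\phi(z,z)=(z,(0,\ast))$, which is the cone point.
\end{enumerate}
One point needs care: the microbundle definition requires $\phi$ to be defined on some neighbourhood of $\Delta(y)$ in the total space $C\times X$, not onto all of it. The set $V$ above suffices.

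The main obstacle is justifying that the local cone chart can be chosen so that the $\mathbb{R}^n$ factor parametrises the MCS stratum $C$ itself, with a single link $L$ working uniformly along $U$. This is exactly where the coincidence of the MCS and intrinsic stratifications in Theorem \ref{thm:main.theorem} is used. A CS chart sends the stratum of $y$ to $\mathbb{R}^n\times\{\ast\}$, and the stratum of $y$ is the MCS stratum $C$, of dimension $n$. Since $L$ is determined up to the choice of chart, the statement holds for any link $L$ of $y$ arising from such a chart. If the corollary is meant for an arbitrary link, I would instead appeal to the fact that $\mathbb{R}^n\times cL\cong\mathbb{R}^n\times cL'$ for any two links and run the same construction with $L'$.
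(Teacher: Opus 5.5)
Your proposal is correct and is essentially the paper's argument. The paper deduces the corollary by combining Theorem~\ref{thm:main.theorem} with \cite[Proposition 3.1]{anderson-hsiang}; your translation trivialization $\phi(z,x)=(z,\tau_{-b_z}(h(x)))$, together with $\mathbb{R}^n\times cL\cong c(\mathbb{S}^{n-1}\ast L)$, just writes out the standard verification behind that proposition. One small correction of emphasis: the fact you actually need, that the chart sends $W\cap X[n]$ onto $\mathbb{R}^n\times\{\ast\}$, comes from the stratum-preserving (CS) part of the main theorem (Claim 2 of its proof), not from the coincidence with the intrinsic stratification.
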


The proof of the corollary follows immediately from the main theorem in conjunction with \cite[Proposition 3.1]{anderson-hsiang}. In particular, the preceding corollary applies to Alexandrov spaces. Note that the theory of microbundles is an important component of smoothing/PL theory (see for example \cite{lashofimmersion,rudyak}), and more recently, understanding PL structures on Alexandrov spaces has garnered some interest (see for example \cite{fujioka2024alexandrov}). 

Next, we investigate the relationship between Alexandrov spaces that are homology manifolds (see Definition \ref{homology-manifold}), and MCS spaces that are homology manifolds and inquire when MCS spaces appear as Gromov--Hausdorff limits of Riemannian manifolds with certain geometric properties. Understanding when spaces arise as Gromov--Hausdorff limits of Riemannian manifolds under various constraints is a central topic in metric and comparison geometry (see for example \cite{petersen-wilhelm-zhu,cassorla,kapovitch-regularity,alattar-tadman-regularity,DottI,DottII,DottIII,ferryandborus})

In \cite{alattar-tadman-regularity}, the authors observed that an example coming from decomposition space theory illustrates that there there exists a 3-dimensional homology manifold with precisely one non-manifold point that does not admit an Alexandrov metric. This space cannot be an MCS space. The next result records that there exists plenty of examples of odd dimensional MCS spaces that are homology manifolds that are not manifolds and therefore, such spaces cannot be homeomorphic to Alexandrov spaces. 

\begin{theorem}
\label{odd-dimensional-MCS spaces}
Every odd-dimensional Alexandrov space that is a homology manifold is a manifold. However, in each odd dimension $n\geq 5$, there exists simply connected compact MCS spaces that are homology manifolds, homotopy equivalent to spheres that are not manifolds (and hence cannot be homeomorphic to Alexandrov spaces). Furthermore, assume $X$ is one of the two:

\begin{enumerate}
    \item $X$ is a $5$-dimensional simply connected compact MCS space that is a homology manifold, and $d$ is any geodesic metric on $X$.
    \item $X^n$ is a compact MCS space that is a homology manifold  homotopy equivalent to $\mathbb{S}^n$ and $d$ is any geodesic metric on $X$.
\end{enumerate}

Then in either case, there exists a sequence of closed Riemannian manifolds $(M_i,g_i)$ with a uniform dimension $n$, uniform lower volume bound $v>0$, uniform contractibility function $\rho$, and uniform upper diameter bound $D$, such that $(M_i,g_i)\rightarrow_{GH}(X,d)$.

\end{theorem}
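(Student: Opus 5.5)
The proof splits into three parts: the Alexandrov statement, the construction of non-manifold examples, and the approximation statement.

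\emph{Odd-dimensional Alexandrov homology manifolds are manifolds.} I would argue by induction on dimension through the space of directions. Let $X^n$ be an Alexandrov space that is a homology manifold, and let $x\in X$. By Perelman's conical neighbourhood theorem, $x$ has a neighbourhood homeomorphic to $c\Sigma_x$, where $\Sigma_x$ is an $(n-1)$-dimensional Alexandrov space of curvature $\geq 1$. The homology-manifold condition at the cone point shows that $\Sigma_x$ is a homology $(n-1)$-manifold with the homology of $\mathbb{S}^{n-1}$. Iterating this (spaces of directions of $\Sigma_x$ are again Alexandrov and again homology manifolds), every point has its own link structure. The key input I would use is the classical fact that in low dimensions ($\leq 2$) homology manifolds that are Alexandrov are manifolds. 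I would then induct to show that all iterated spaces of directions of dimension $\le n-2$ are homotopy spheres and manifolds. Here the parity matters. If $n$ is odd, $\Sigma_x$ has even dimension $n-1$. Using the induction hypothesis, $\Sigma_x$ is then a manifold homology sphere of even dimension, and a curvature $\geq1$ Alexandrov space whose own spaces of directions are spheres. I expect Perelman's doubling/suspension structure to show $\Sigma_x$ is simply connected: a homology sphere with nontrivial $\pi_1$ has perfect $\pi_1$, and in the Alexandrov setting I would use that the fundamental group of a positively curved space of directions is finite. A finite group acting freely on an even-dimensional homology sphere is trivial or $\mathbb{Z}_2$ by a Lefschetz argument, and the perfectness rules out $\mathbb{Z}_2$. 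So $\Sigma_x$ is a simply connected homology sphere, hence a homotopy sphere, hence $\mathbb{S}^{n-1}$ by the Poincaré conjecture. Then $c\Sigma_x\cong\mathbb{R}^n$. I expect making this parity argument fully rigorous at each inductive level to be the main obstacle.

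\emph{Non-manifold MCS examples in odd dimensions $n\ge5$.} Take a non-simply connected homology sphere $P^{n-3}$ bounding a contractible manifold (e.g.\ the Poincaré sphere for $n=6$; in general Kervaire's homology spheres in dimensions $\geq 5$, or products/joins to adjust parity). I would form the suspension $\Sigma^{k}P$ with $k$ chosen so that the dimension is odd but the suspension is not a manifold. This happens for a single suspension, which is not a manifold since the link $P$ is not simply connected. For instance, $X=\Sigma P^{n-1}$ with $P^{n-1}$ an even-dimensional ($n-1\ge4$) non-simply connected homology sphere, e.g.\ a Kervaire homology $4$-sphere. Then $X$ is an MCS space: $P$ is a manifold, and suspensions of MCS spaces are MCS. $X$ is a homology manifold, simply connected, and homotopy equivalent to $\mathbb{S}^n$ (it is a simply connected homology sphere), but it is not a manifold at the suspension points. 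By the first part, $X$ is not homeomorphic to an Alexandrov space.

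\emph{The approximation statement.} I would invoke the Ferry type theorems. A compact ANR homology manifold of dimension $\ge5$ with trivial Quinn obstruction, and with a geodesic metric, is a GH limit of manifolds with uniform contractibility and volume bounds, provided it is resolvable and manifold approximations exist. MCS homology manifolds are locally cone-like over MCS links. By the main theorem the strata are CS, so they are resolvable with Quinn index $1$. In case (2), $X$ is homotopy equivalent to $\mathbb{S}^n$, which gives a degree-one normal map from $\mathbb{S}^n$ that is a $\mathbb{UV}^1$/cell-like approximation, controlled by surgery. In case (1), simply connected dimension $5$ homology manifolds are handled by Barden's classification to produce a manifold $M$ and a cell-like map $M\to X$. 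Pulling back the metric via cell-like maps and smoothing, following Ferry's construction, gives $(M_i,g_i)$ with uniform $\rho$, volume, and diameter bounds converging to $(X,d)$.
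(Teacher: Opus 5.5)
Your construction of the non-manifold examples (suspending an even-dimensional homology sphere with nontrivial $\pi_1$ from Kervaire) is essentially the paper's. The first part, however, has a genuine gap: it aims at a false intermediate statement. You try to show inductively that for odd $n$ every $\Sigma_x$ is a manifold whose own spaces of directions are spheres, so that $\Sigma_x\cong\mathbb{S}^{n-1}$. This fails. The spherical double suspension $\mathrm{Susp}^2(P)$ of the Poincar\'e sphere is a $5$-dimensional Alexandrov space with curvature $\ge 1$, and it is a homology manifold (indeed $\cong\mathbb{S}^5$ by Cannon--Edwards). Yet at a suspension point $\Sigma_x=\mathrm{Susp}(P)$ is not a manifold, and its space of directions at a vertex is $P$; the paper points this out after its even-dimensional result. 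So no argument of the form ``$X$ is a manifold because its links are spheres'' can work.

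The Lefschetz step is also misapplied. $\pi_1(\Sigma_x)$ acts freely on the universal cover $\widetilde{\Sigma}_x$, not on $\Sigma_x$, and $\widetilde{\Sigma}_x$ need not be a homology sphere. By Kervaire there are even-dimensional homology spheres with $\pi_1\cong SL(2,5)$, which is finite and perfect; their universal covers have Euler characteristic $240$. So finiteness plus perfectness cannot give simple connectivity, and the curvature must enter through Synge's theorem. The paper's route is as follows:
\begin{enumerate}
\item $\Sigma_x$ has the homology of $\mathbb{S}^{n-1}$ (Wu), hence is orientable (Mitsuishi).
\item It is even-dimensional with curvature $\ge 1$, so it is simply connected by Petrunin's Synge theorem for Alexandrov spaces.
\item Wu's recognition theorem (a homology manifold with simply connected spaces of directions is a manifold) then gives that $X$ is a manifold, with no claim that $\Sigma_x$ is a sphere.
\end{enumerate}

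In the approximation part, case (1) misplaces the role of simple connectivity. Barden's classification of smooth simply connected $5$-manifolds does not produce a cell-like map onto $X$; resolvability does. The CS structure makes $X$ an ENR, Henderson's theorem isolates the singular points, and the existence of manifold points forces Quinn's index to be $1$. What Ferry--Okun additionally needs is that the resolving manifold $M$ be smoothable. Here $\mathrm{ks}(M)\in H^4(M;\mathbb{Z}_2)\cong H_1(M;\mathbb{Z}_2)\cong H_1(X;\mathbb{Z}_2)=0$, by duality and because the cell-like map is a homotopy equivalence. So $M$ is PL, hence smooth in dimension $5$; this is exactly where $\pi_1(X)=0$ is used, and your proposal does not address it.

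In case (2), a homotopy equivalence $\mathbb{S}^n\to X$ is not a cell-like map, and no controlled surgery is needed. The resolution $M\to X$ is a homotopy equivalence, so $M\simeq\mathbb{S}^n$, and $M\cong\mathbb{S}^n$ by the Poincar\'e conjecture. The lower volume bound then comes from the uniform contractibility function (Gromov, Greene--Petersen).
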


The previous result removes the lower curvature bound, compactness, and the fundamental group hypothesis in \cite[Corollary 4.3]{Wu} for odd dimensions.  Observe that the uniform lower volume bound actually follows from the uniform contractibility function property, and the diameter bound follows from convergence. However, we wish to emphasize these geometric properties. 

Examples of spaces satisfying $(2)$ that are not manifolds abound. Indeed, if $Z^n$ is a homology sphere (manifold with the same homology groups as $\mathbb{S}^n$) and $\pi_1(Z)\neq 0$, then $X=\mathrm{Susp}(Z^n)$ is an example of a compact MCS space that is a homology manifold homotopy equivalent to a sphere that  is not a manifold. 

Kervaire showed that there exists many homology spheres in all dimensions $n>4$, that are not simply connected \cite{kervaire}. An example of a $5$ dimensional compact simply connected MCS space that is a homology manifold, that is not an Alexandrov space arises from applying \cite[Remark, p. 68]{kervaire} and then taking the suspension of the resulting homology sphere. Observe further that the geodesic distance condition is not at all restrictive. Indeed, every finite dimensional compact, locally connected continuum admits a geodesic metric, as was shown by Bing \cite{bing-metric}. 

We mention that in \cite{petersen-wilhelm-zhu}, the authors, among other things, investigate whether suspensions over certain exotic spheres appear as Gromov--Hausdorff limits of manifolds with controlled curvature bounds.

In \cite[Theorem 1.2]{fujioka2024alexandrov}, Fujioka showed that the space of directions of any point in an Alexandrov space without proper extremal sets is homeomorphic to a sphere.  In low dimensions, Galaz--García and Guijarro \cite{fernando-with-guijarro} recovered the following result that is implicit in \cite{kapovitch-regularity}: In dimensions $n\leq 4$, the space of directions at any point in an Alexandrov manifold is homeomorphic to a sphere \cite[Corollary 2.5]{fernando-with-guijarro} (namely, in these dimensions, all Alexandrov manifolds are \textit{topologically regular}). In \cite{petersen-problem-list}, Petersen conjectured that the space of directions of any point in a \textit{smoothable} Alexandrov space is homeomorphic to a sphere. In \cite{kapovitch-regularity}, Kapovitch settled the conjecture and more. The next result shows several things. First, it yields a short proof to Petersen's conjecture in even dimensions. In fact, we will show that requiring the Alexandrov space to be merely a topological manifold is enough in such dimensions. Second, it shows Alexandrov manifolds are topologically regular in all even dimensions (instead of dimensions $\leq 4$). Lastly, it allows us to obtain a strengthened version of Fujioka's result in even dimensions (the authors are very thankful to Tadashi Fujioka for pointing out the next result). 

\begin{theorem}
\label{even-dim-alexandrov-mflds}

Let $X^n$ be an even dimensional Alexandrov space that is a topological manifold, $n\geq 4$. Then for every $p\in X$, $\Sigma_p$ is homeomorphic to a sphere.
\end{theorem}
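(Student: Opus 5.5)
The plan is to reduce the statement to the first assertion of Theorem~\ref{odd-dimensional-MCS spaces}, namely that odd-dimensional Alexandrov homology manifolds are manifolds. After that, the generalized Poincaré conjecture finishes the argument.

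\emph{Step 1: local structure at $p$.} Fix $p\in X$. By Perelman's conical neighbourhood theorem, $p$ has a neighbourhood $U$ that is pointed homeomorphic to the open cone $c\Sigma_p$. Here $\Sigma_p$ is a compact Alexandrov space of dimension $n-1$ with curvature $\geq 1$, and $n-1$ is odd and at least $3$. Removing the apex gives
\[
U\setminus\{p\}\cong \Sigma_p\times(0,1).
\]
This is an open subset of the topological $n$-manifold $X$, so it is itself an $n$-manifold.

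\emph{Step 2: $\Sigma_p$ is a homology manifold and a homology sphere.} By the Künneth formula for local homology, for $x\in\Sigma_p$ and $t\in(0,1)$,
\[
H_*(\Sigma_p\times\mathbb{R},\,\Sigma_p\times\mathbb{R}\setminus\{(x,t)\})\cong H_{*-1}(\Sigma_p,\Sigma_p\setminus\{x\}).
\]
The left-hand side is the local homology of an $n$-manifold, so $\Sigma_p$ is a homology $(n-1)$-manifold. Since $p$ is a manifold point, the local homology at the apex gives
\[
\tilde H_{*-1}(\Sigma_p)\cong H_*(c\Sigma_p,\,c\Sigma_p\setminus\{p\})\cong H_*(\mathbb{R}^n,\mathbb{R}^n\setminus\{0\}).
\]
Hence $\Sigma_p$ has the integral homology of $\mathbb{S}^{n-1}$.

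\emph{Step 3: $\Sigma_p$ is a manifold.} Now $\Sigma_p$ is an odd-dimensional Alexandrov space that is a homology manifold. By the first part of Theorem~\ref{odd-dimensional-MCS spaces}, it is a topological manifold, hence a homology sphere.

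\emph{Step 4: $\Sigma_p$ is simply connected.} Choose a Euclidean ball $B\subset U$ around $p$. Using the cone structure at two scales, choose $0<\delta<\varepsilon$ so that
\[
\Sigma_p\times(0,\delta)\subset B\setminus\{p\}\subset \Sigma_p\times(0,\varepsilon).
\]
The composite of the two inclusions is a homotopy equivalence, via radial rescaling in the cone. Therefore $\pi_1(\Sigma_p)$ is a retract of $\pi_1(B\setminus\{p\})\cong\pi_1(\mathbb{S}^{n-1})=0$, using $n\geq 3$. So $\Sigma_p$ is simply connected.

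\emph{Step 5: conclusion.} A simply connected closed homology $(n-1)$-sphere is a homotopy sphere, by Hurewicz and Whitehead. Since $n-1\geq 3$, the topological Poincaré conjecture applies: Perelman for $n-1=3$, Freedman for dimension $4$, and Smale/Newman for dimension $\geq 5$. Hence $\Sigma_p\cong\mathbb{S}^{n-1}$.

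The only substantive input is the odd-dimensional regularity used in Step 3, which is where the parity of $n$ enters. Everything else is standard local-homology and cone bookkeeping. The points to watch are the scale-comparison homotopy in Step 4 and making sure the cone neighbourhood is honestly open in $X$.
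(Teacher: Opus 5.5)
Your proof is correct and follows the paper's route. Like the paper, you reduce to the first part of Theorem~\ref{odd-dimensional-MCS spaces} applied to $\Sigma_p$, then finish with Hurewicz/Whitehead and the Poincar\'e conjecture. The only difference is that you derive the facts that $\Sigma_p$ is a homology manifold, is a homology sphere, and is simply connected directly from the conical neighbourhood, using local homology and a two-scale cone comparison, whereas the paper simply cites Wu's Proposition~3.1 and Theorem~1.1 for these inputs.
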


Theorem \ref{even-dim-alexandrov-mflds} is a consequence of Theorem \ref{odd-dimensional-MCS spaces}. Indeed, observe that since $X$ is a topological manifold, for each $p\in X$, by \cite[Theorem 1.1]{Wu}, $\Sigma_p$ is simply connected. Also, $\Sigma_p$ is a homology manifold, and has the same homology groups as the sphere \cite[Proposition 3.1]{Wu}. By Theorem \ref{odd-dimensional-MCS spaces}, $\Sigma_p$ is a topological manifold. Hence, by Whitehead's theorem and the resolution of the Poincaré conjecture, $\Sigma_p$ is topologically a sphere. The theorem now follows. To see how to get the even dimensional version of Petersen's conjecture, observe that if $X^n$ is a smoothable Alexandrov space, then by Perelman's stability theorem it is a manifold \cite{perelman1991alexandrov,Vitali}. Hence if $n$ is even, the theorem immediately gives Petersen's conjecture for even dimensions.

In odd dimensions, the result above is false. Indeed, the double suspension of the Poincaré homology 3-sphere is topologically a sphere by the double suspension theorem due to Edwards and Cannon \cite{edwards-suspensions-of-homology-spheres,cannon}. However, the double suspension has points $o$ such that $\Sigma_o=\mathrm{Susp}(P)$, where $P$ is the Poincaré homology sphere.

The next result, which is immediate from Theorem \ref{odd-dimensional-MCS spaces}, is a sphere theorem generalizing the sphere-theorem due to Grove-Shiohama \cite{groveshiohama}, and attempts to remove the ``no non-manifold points'' hypothesis in \cite[Remark 4.10]{fujioka2024alexandrov} (the authors are very thankful to Galaz--García for discussions surrounding the next Corollary). Sphere theorems are an important component of Alexandrov and Riemannian geometry (see for example \cite{petersen-wilhelm-zhu,Grove-Wilhelm,  radiusspheretheorem,fredwilhelm-collapsingtoalmostiriemannianspaces,grove-and-fred-wilhelm,fujioka2024alexandrov}).

\begin{cor}
\label{sphere-theorem}
Assume $X^n$ is an Alexandrov space that is a homology manifold, $n\geq 4$, $curv\geq 1$ and $\mathrm{diam}(X)>\frac{\pi}{2}$.  

\begin{enumerate}
    \item If $n$ is odd then $X$ is homeomorphic to a sphere.
    \item If $n$ is even then $X$ is homeomorphic  $\mathrm{Susp}(M)$, where $M$ is a closed $n-1$ dimensional  homology sphere with curvature bounded below by $1$ (note $M$ is a manifold, but need not be a sphere).
\end{enumerate}

\end{cor}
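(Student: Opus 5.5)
The starting point is the Grove--Shiohama argument for Alexandrov spaces. Suppose $\mathrm{curv}\geq 1$ and $\mathrm{diam}(X)>\frac{\pi}{2}$, and let $p,q$ realize the diameter. Then the distance functions from $p$ and from $q$ have no critical points away from $\{p,q\}$, in the sense of Perelman's Morse theory. Perelman's fibration/stability theory (as in \cite{perelman1991alexandrov}, cf. \cite[Remark 4.10]{fujioka2024alexandrov}) then shows that $X$ is homeomorphic to the suspension of $\Sigma_p$. Indeed, $X\setminus\{p,q\}$ is homeomorphic to $\Sigma_p\times(0,1)$, with neighbourhoods of $p$ and $q$ being cones over $\Sigma_p\cong\Sigma_q$. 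So I take $X\cong \mathrm{Susp}(\Sigma_p)$ as the first step. Here $\Sigma_p$ is a compact $(n-1)$-dimensional Alexandrov space with $\mathrm{curv}\geq 1$.

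The second step transfers the homology manifold hypothesis to $\Sigma_p$. Since $X$ is a homology $n$-manifold, each point $x\in\Sigma_p\times\{t\}$ has local homology $H_*(X,X\setminus x)$ equal to that of $\mathbb{R}^n$. This equals the local homology of $\Sigma_p$ at the corresponding point, shifted by one via the product with $(0,1)$. Hence $\Sigma_p$ is a homology $(n-1)$-manifold. The cone point $p$ gives $H_*(X,X\setminus p)\cong \tilde H_{*-1}(\Sigma_p)$, so $\Sigma_p$ has the homology of $\mathbb{S}^{n-1}$, as in \cite[Proposition 3.1]{Wu}. Thus $M:=\Sigma_p$ is a closed homology $(n-1)$-sphere with $\mathrm{curv}\geq1$.

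The third step splits by parity.
\begin{enumerate}
\item If $n$ is even, then $n-1$ is odd. By the first part of Theorem \ref{odd-dimensional-MCS spaces}, $M$ is a topological manifold. Hence $X\cong\mathrm{Susp}(M)$ with $M$ a closed homology sphere with $\mathrm{curv}\geq 1$, which is statement (2).
\item If $n$ is odd, then $X$ itself is an odd-dimensional Alexandrov homology manifold, so by Theorem \ref{odd-dimensional-MCS spaces} $X$ is a topological manifold. In particular the cone point $p$ has a Euclidean neighbourhood. Since a neighbourhood of $p$ is $c(M)$, this forces $M$ to be simply connected (the link of a manifold point of a suspension, as in \cite[Theorem 1.1]{Wu}). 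Then $M$ is a simply connected homology sphere of dimension $n-1\geq 4$. Applying the even-dimensional argument (Theorem \ref{even-dim-alexandrov-mflds}'s reasoning: Whitehead plus Poincaré) shows $M$ is homotopy equivalent, hence homeomorphic once known to be a manifold, to $\mathbb{S}^{n-1}$.
\end{enumerate}

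The main obstacle is that $M$ is even-dimensional in case (1), so Theorem \ref{odd-dimensional-MCS spaces} does not directly make $M$ a manifold. I would bypass this by arguing that $X$ is a simply connected homology manifold homotopy equivalent to $\mathbb{S}^n$: $\mathrm{Susp}(M)$ is simply connected, and its homology is that of a sphere. So $X$ is a homotopy sphere, and since it is a manifold it is homeomorphic to $\mathbb{S}^n$ by the generalized Poincaré conjecture. This needs no information on $M$ beyond the homotopy type.

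The point to double-check is therefore exactly the use of "odd-dimensional Alexandrov homology manifold $\Rightarrow$ manifold" at the right dimension in each case. It is also worth confirming that the suspension homeomorphism from Perelman's theory is available under the stated hypotheses.
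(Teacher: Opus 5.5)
Your proposal is correct and is essentially the paper's proof. Perelman's suspension theorem gives $X\approx\mathrm{Susp}(\Sigma_p)$, with $\Sigma_p$ a homology manifold and homology sphere (Wu, Proposition 3.1). For $n$ even, the first part of Theorem \ref{odd-dimensional-MCS spaces} is applied to $\Sigma_p$; for $n$ odd it is applied to $X$, and the paper then uses that a suspension which is a manifold is a sphere, which is exactly your homotopy-sphere-plus-Poincaré argument.

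The detour in the odd case aimed at $M\approx\mathbb{S}^{n-1}$ should simply be deleted, since $M$ need not be a manifold there. For example, in the spherical double suspension of the Poincaré sphere $P$, a suspension point has $\Sigma_p=\mathrm{Susp}(P)$. Your bypass already avoids needing it.
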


We mention in passing that if $n=4$, then an Alexandrov space $X^4$ that is a homology manifold with $\mathrm{curv}\geq 1$ and $\mathrm{diam}(X)> \frac{\pi}{2}$ is either a sphere or the suspension over the Poincaré homology 3-sphere. There are no other examples. To see this, observe that either $\pi_1(M)=0$ or not. If $\pi_1(M)$ is $0$, then $X$ must be topologically a sphere by the Poincaré conjecture. If $\pi_1(M)\neq 0$, then since $M$ has curvature bounded below by $1$, $\pi_1(M)$ is finite (see \cite[Theorem 2.10]{HarveySearle}). Now since the only 3-dimensional homology sphere with non-trivial finite fundamental group is the Poincaré homology sphere (we are very thankful to Galaz--García for pointing this out), we get the result. Also observe that if $X^n$ is an Alexandrov space that is a homology manifold, $n\geq 4$ ($n$ is even), $\mathrm{curv}\geq 1$ and $\diam(X)>\frac{\pi}{2}$ then it is a Gromov--Hausdorff limit of compact Riemannian $n$-manifolds, all homeomorphic to spheres, and all with a uniform contractibility function, uniform lower volume bound, and uniform upper diameter bound (again, we wish to emphasize these properties).

The next Corollary, although well known, is relevant for historical reasons.

\begin{cor}
Let $\{X_i^n\}_{i=1}^{\infty}$ be a sequence of closed $n$-dimensional Riemannian manifolds, $n\geq 5$, with a uniform lower sectional curvature bound $k$, upper diameter bound $D$, and uniform lower volume bound $v$. If $X_i^n\rightarrow_{GH}X^n$, and $n$ is odd, then for all large $i$, $X_i$ is homeomorphic to $X$.
\end{cor}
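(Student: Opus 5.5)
The plan is to combine Perelman's stability theorem with the odd-dimensional part of Theorem \ref{odd-dimensional-MCS spaces}. The hypotheses (lower curvature bound $k$, diameter bound $D$, volume bound $v$) guarantee a noncollapsed limit. Hence the Gromov--Hausdorff limit $X$ is an $n$-dimensional Alexandrov space with $\mathrm{curv}\geq k$.

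Perelman's stability theorem \cite{perelman1991alexandrov,Vitali} then says that for all large $i$, $X_i$ is homeomorphic to $X$. Strictly speaking, this already finishes the proof, and no parity assumption is used. This is why the corollary is described as ``well known'': its interest lies in its historical role. I would therefore structure the argument so that the odd-dimensional hypothesis does genuine work, along the lines of the pre-stability-theorem approach.

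\textbf{Step 1: $X$ is a homology manifold.} Since $X$ is a noncollapsed limit of manifolds with $\mathrm{curv}\geq k$, the limit has no boundary. Uniform local contractibility (Grove--Petersen--Wu) together with the finite-dimensional controlled-homotopy theory then shows that $X$ is a homology $n$-manifold, as in \cite{Wu}.

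\textbf{Step 2: $X$ is a topological manifold.} By the first sentence of Theorem \ref{odd-dimensional-MCS spaces}, every odd-dimensional Alexandrov space that is a homology manifold is a manifold. Since $n$ is odd, $X$ is a topological manifold.

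\textbf{Step 3: $X_i\cong X$ for large $i$.} Once $X$ is known to be a closed topological $n$-manifold with $n\geq 5$, I would invoke the controlled topology results of Grove--Petersen--Wu (via Chapman--Ferry $\alpha$-approximation). A uniform contractibility function gives $\epsilon$-homotopy equivalences $X_i\to X$ for large $i$. Because $X$ is a manifold of dimension $\geq 5$, the $\alpha$-approximation theorem upgrades these to homeomorphisms.

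The main obstacle is Step 2 in odd dimensions, namely showing that a homology-manifold Alexandrov space is a manifold. I would reduce this to links: by induction on dimension, each space of directions $\Sigma_p$ is an even-dimensional Alexandrov homology sphere whose own links are spheres, so $\Sigma_p$ is a manifold. Then $\Sigma_p$ is simply connected, by the argument of \cite{Wu}, and by Poincaré/Whitehead it is a sphere. The conical neighbourhood theorem then shows that $p$ is a manifold point. All of this is already packaged into Theorem \ref{odd-dimensional-MCS spaces}, so I would simply cite it.
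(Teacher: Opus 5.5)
Your proof is correct and follows the route the paper indicates. $X$ is an $n$-dimensional Alexandrov space that is a homology manifold (known from Grove--Petersen--Wu), the first part of Theorem~\ref{odd-dimensional-MCS spaces} makes it a topological manifold when $n$ is odd, and $\alpha$-approximation then upgrades the controlled homotopy equivalences $X_i\to X$ to homeomorphisms without invoking the stability theorem.

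One correction, which does not affect Steps 1--3 because you only cite the theorem. Your closing sketch of why odd-dimensional Alexandrov homology manifolds are manifolds is false and should be removed.
\begin{itemize}
\item The space of directions need be neither a manifold nor a sphere. The paper's own example is the double suspension of the Poincar\'e sphere $P$: a $5$-dimensional Alexandrov space homeomorphic to $\mathbb{S}^5$ with $\Sigma_o=\mathrm{Susp}(P)$. In turn, $\mathrm{Susp}(P)$ has $P$ as its space of directions at the poles.
\item So your induction breaks immediately. Odd-dimensional homology spheres with $\mathrm{curv}\geq 1$, such as $P$, are manifolds but not spheres. Hence neither ``$\Sigma_p$ is a manifold'' nor ``$\Sigma_p$ is a sphere'' can be proved.
\item Obtaining $\pi_1(\Sigma_p)=0$ from the result of Wu that the paper quotes is circular. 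That result assumes $X$ is already a manifold.
\end{itemize}
The paper's argument never identifies $\Sigma_p$. It observes that $\Sigma_p$ is an even-dimensional Alexandrov homology sphere with $\mathrm{curv}\geq 1$. It is therefore orientable by Mitsuishi's theorem, and simply connected by Petrunin's Synge theorem. Wu's recognition theorem (homology manifold plus simply connected spaces of directions) then gives that $X$ is a manifold.
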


Note that the previous Corollary is well known for all dimensions as was shown by Perelman \cite{perelman1991alexandrov}. However, the reason we have mentioned it is the following: Historically, when Grove--Petersen--Wu \cite{GPW} proved that the class under consideration above admits a uniform contractibility function, they  were only able to show, using controlled topology, that under the hypothesis above, $X_i$ is homeomorphic to $X_j$ for all large $i,j$ and $X$ is a homology manifold. Then Perelman established the stronger result that $X_i$ is homeomorphic to $X$ for all large $i$. Combining our Theorem \ref{odd-dimensional-MCS spaces}, with the fact that at the time of Grove--Petersen--Wu, it was already known that $X$ in the previous Corollary is a homology manifold, we see that the controlled topology techniques can in fact be used, in a manner that is independent  of Perelman's proof of the stability theorem, to prove that in odd dimensions, $X_i$ is homeomorphic to $X$.

Even though by Theorem \ref{odd-dimensional-MCS spaces}, there exist MCS spaces that are homology manifolds, but not Alexandrov spaces, all such spaces share properties much like in the Alexandrov world. In particular, Theorem \ref{thm:main.theorem}, combined with \cite[Theorem 2, Lemma 1]{henderson}, gives streamlined proofs, and allows us to recover several (well known) results originally due to Wu \cite{Wu,wu-generalizing-edwards-thm-cone-like}. Though, in our case, the result (as stated) is slightly more general, since, for example, in \cite{wu-generalizing-edwards-thm-cone-like}, the links of the cones are, by definition, required to be connected. We note that for Alexandrov spaces, the next corollary also follows  from the recent work of Fujioka \cite{fujioka2024alexandrov}.

\begin{cor}
\label{cor:wu}
If $X$ is an MCS space that is a homology manifold, then 

\begin{enumerate}
    \item $X\times \mathbb{R}$ is a manifold.
    \item The set of non-manifold points of $X$ is discrete.
    \item If $X$ is 3 dimensional, then $X$ is a manifold.
\end{enumerate}
\end{cor}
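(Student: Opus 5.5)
The plan is to combine Theorem \ref{thm:main.theorem} with Henderson's results \cite[Theorem 2, Lemma 1]{henderson}, which concern CS sets (more precisely, locally cone-like stratified spaces) that are homology manifolds. By Theorem \ref{thm:main.theorem}, $X$ with its MCS stratification is a CS set, and this stratification coincides with the intrinsic one. So for any $x$ in a stratum of dimension $k$, there is a neighbourhood of the form $\mathbb{R}^k\times cL$ with $L$ compact. Since $X$ is an $n$-dimensional homology manifold, the standard Künneth/excision computation shows that $L$ has the homology of $\mathbb{S}^{n-k-1}$. Likewise, each point of $L$ has local homology of a homology manifold, since $\mathbb{R}^{k+1}\times cL'$-type neighbourhoods arise from points of $L$ via the CS charts. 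Hence $L$ is a homology $(n-k-1)$-sphere.

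For (1), I would argue by induction on the depth of the stratification, applied to $X\times\mathbb{R}$. Its natural stratification is again CS, with local models $\mathbb{R}^{k+1}\times cL$. The key input is Henderson's Lemma 1, or equivalently the Edwards--Cannon double suspension theorem: a link $L$ that is a homology sphere and, inductively, has the property that $L\times\mathbb{R}$ is a manifold, satisfies that $\mathbb{R}^{k+1}\times cL$ is a manifold. For $k\ge 1$ the intrinsic stratification argument shows that $\mathbb{R}^k\times cL$ is already handled within $X$. The only genuinely new case is $k=0$, where one needs $\mathbb{R}\times cL\cong \mathbb{R}^{n}$. This follows from the fact that $cL\times\mathbb{R}$ is a homology manifold with codimension $\ge 3$ singular set satisfying the disjoint disks property, so Edwards' cell-like approximation theorem applies; alternatively it follows directly from Henderson's Theorem 2. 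The resolvability of the links needed here follows from Remark \ref{Cor: Links are resolvable homology manifolds}.

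For (2), I would show that every point in a stratum of dimension $k\ge 1$ is a manifold point. Near such a point $X\cong\mathbb{R}^k\times cL\cong \mathbb{R}^{k-1}\times(\mathbb{R}\times cL)$. By the argument in (1) applied to $cL$, which is itself a homology manifold and locally cone-like, $\mathbb{R}\times cL$ is a manifold, so the point is a manifold point. Thus non-manifold points lie in the $0$-dimensional MCS stratum. That stratum is discrete: by definition each point of it has a conical neighbourhood in which the cone point is the only point of that stratum. For (3), when $n=3$ the link of a $0$-stratum point is a $2$-dimensional homology manifold with the homology of $\mathbb{S}^2$. Two-dimensional homology manifolds (ANR) are manifolds, so $L\cong\mathbb{S}^2$ and the cone is $\mathbb{R}^3$.

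The main obstacle is the $k=0$ case in (1): verifying the hypotheses of Henderson's Theorem 2 for links that need not be connected or MCS themselves (cf. Lemma \ref{pathology}). I would handle this using the Link Replacement Lemma, Lemma \ref{replacement lemma}, which replaces $L$ with an MCS link to which the induction applies.
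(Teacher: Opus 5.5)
Your proposal takes the same route as the paper, whose entire proof is to invoke Theorem~\ref{thm:main.theorem} to see that $X$ is a CS set and then apply \cite[Theorem 2, Lemma 1]{henderson}; your reductions of (2) to (1) via $\mathbb{R}^k\times cL\cong\mathbb{R}^{k-1}\times(\mathbb{R}\times cL)$ (with $L$ taken MCS), and of (3) to the fact that compact $2$-dimensional ANR homology manifolds are surfaces, are sound ways of unpacking this. One caution: your fallback argument for the $k=0$ case does not stand on its own, since a codimension-$\geq 3$ singular set does not by itself give the disjoint disks property (e.g.\ $M/A$ for a non-cellular cell-like set $A$ in a manifold $M$ of dimension $\geq 5$ has a single singular point but is not a manifold), so that step really rests on Henderson's (or Cannon--Edwards') double-suspension-type input, and the resolvability you need there comes from Quinn's index together with the density of manifold points rather than from Remark~\ref{Cor: Links are resolvable homology manifolds}, which concerns the strata $L[k]$, not the link $L$ itself.
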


Note that establishing that certain spaces appearing in metric geometry (that are not necessarily Alexandrov) satisfy the properties above is of interest. For example, recently Fujioka-Gu \cite{fujioka-gu} showed that locally BNPC spaces that are homology manifolds satisfy property $(2)$, generalizing earlier work due to Lytchak and Nagano in the $\mathrm{CAT}(0)$ setting \cite[Theorem 1.2]{LytchakNagano2}. Furthermore, 3-dimensional locally BNPC spaces that are homology manifolds satisfy property (3) above. This can be derived using the work of Thurston \cite[Theorem 3.3]{thurston} and also the work of Fujioka-Gu \cite[Theorem 3.9]{fujioka-gu}. More is true. In fact, if $X$ is a locally BNPC space that is a homology manifold, then $X\times \mathbb{R}$ is indeed a manifold and thus $(1)$ results. This follows from the proof of the topological regularity theorem due to Lytchak and Nagano \cite{LytchakNagano2}. Namely, every 1-strained point in a locally BNPC homology manifold $X$ is a manifold point (see also \cite[Theorem 7.1]{fujioka-gu}). Since every point of $X\times \mathbb{R}$ is obviously 1-strained in the line direction, $X\times \mathbb{R}$ must be a manifold (this was pointed out to us by Tadashi Fujioka, whom we thank gratefully). In fact, one can be content with a topological argument in higher dimensions, and in the ENR setting by appealing to the following result due to Bryant and Lacher \cite[Theorem 1.2]{BryantandLacher}: For $m\geq 5$, every generalized $m$-manifold $X$ with a discrete singular set is a codimension 1 manifold factor (note the dimension restriction).

We shall now provide some additional context and mention related work. The structure of MCS spaces has found several important consequences to the theory of Alexandrov spaces. For example, the notion of orientability for Alexandrov spaces was developed in detail by considering their MCS structure (see for example the work by Harvey--Searle \cite{HarveySearle} and also see the work by Mitsuishi \cite{mitsuishi2016orientabilityfundamentalclassesalexandrov}). The Bonnet--Myers theorem for Alexandrov spaces can be shown to follow from the fact that MCS spaces have universal covers as shown by Harvey and Searle \cite[Theorem 2.10]{HarveySearle}. Similarly,  Harvey and Searle showed that the existence of a ramified double cover for an Alexandrov space follows also from the corresponding existence result for \textit{non-branching} MCS spaces \cite[Theorem 3.4]{HarveySearle}. In \cite[Appendix]{yamaguchi-mitsuishi}, Yamaguchi and Mitsuishi obtained refinements of Perelman's fibration theorem by considering the MCS structure of regular fibers of admissible functions. In general, understanding the structure and properties of the strata of a given stratification on an Alexandrov space have yielded fruitful consequences. For example, Harvey \cite[Section 5]{harvey2016equivariant} showed that extremal sets (which stratify an Alexandrov space) satisfy a tameness property. This property allowed for the application of a generalization of Palais' covering homotopy theorem due to Harvey \cite{g-actions-with-close-orbit-spaces}.

Note that stratified spaces, and CS sets in particular, play an important role in topology. For example, Goresky and MacPherson \cite{Goresky-MacPherson} developed the celebrated theory of intersection homology on CS sets, which has several applications. For example, Schütz \cite{dirkI,dirkII} used intersection homology and stratified spaces to study moduli spaces of a closed linkage. In high dimensions, these spaces turn out to be pseudomanifolds as observed by Schütz (see \cite[section 3]{dirkI} for more details). Additionally, Habegger and Saper generalized intersection homology \cite{Habegger-Leslie}, and established several foundational results. We refer the reader to the books by Banagl \cite{Banagl} and Friedman \cite{Friedman-singular-intersection-homology-29} for a modern treatment on the subject. As another example, Steinberger and West \cite{steinberger-west} proved an equivariant version of Siebenmann's CE-mapping theorem (\cite{approximatingcellularmapsbyhomeossiebenmann}) using the theory of CS sets.

Thus, our results may be viewed as forming a new bridge between metric geometry and the theory of stratified spaces.

\begin{ack}
The authors are extremely grateful to Fernando Galaz--Garcia, Martin Kerin, and Dan Disney for all their comments, criticism and questions during the Durham metric geometry reading seminar. The authors are very thankful to Tadashi Fujioka for his comments and encouraging remarks. Moreover, we are  thankful to Dirk Schütz and Washington Mio for their comments and supportive remarks. Finally, we thank Victoria Pelayo Alvaredo for a careful reading of an earlier version of this manuscript, and for providing several constructive comments and feedback.

\end{ack}

\section{Preliminaries and Definitions}

Before we begin, we denote the vertices of cones  by $o$, and recall that the cone over the empty set is simply a point. First, we give the definition of a filtration, and then define both CS sets and MCS spaces, and mention some of their elementary properties that we shall use. Finally, we introduce an invariant for MCS spaces, that captures, in some sense, how singular a point is relative to the MCS structure. This will be our main tool. We provide examples throughout and devote the majority of this section to studying this object. Let us start with the definition of a locally cone-like space.

\begin{definition}
A space $X$ is said to be \emph{locally cone-like} if for every $p\in X$ there exists a neighbourhood $U$ of $p$, and a compactum $K$ such that $(U,p)\approx (cK,o)$.
\end{definition}
The symbol $\approx$ denotes (pointed) homeomorphism. Now we recall the notion of a stratified set.

\begin{definition}
A \textit{stratified set} $X$ is a metrizable space equipped with a filtration $\varnothing =X^{(-1)}\subseteq \cdots \subseteq X^{(n-1)}\subseteq X^{(n)}\subseteq...\subseteq X$ such that each $X^{(i)}$ is closed in $X$, and the components of the strata, $X[i]= X^{(i)}-X^{(i-1)}$ are open in $X[i]$.
\end{definition}

We will mainly be interested in the case where $X^{(n)}=X$ for some $n$.

For example, a compact manifold $M^n$ with boundary can be stratified by setting $M^{(i)}=\varnothing$ for $i< n-1$, $M^{(n-1)}=\partial{M}$ and $M^{(k)}=M$ for all $k\geq n$. If $M$ is a manifold without boundary, then the stratification is given by $M^{(n)}=M$ and $M^{(k)}=\varnothing$ for $k<n$.

\begin{remark}
We shall often abuse language and merely denote a stratified set by $X$.

\end{remark}

\begin{remark}
Note that the strata in a stratified set are not required to be of the same dimension as their index. In general, these two notions do not coincide.
\end{remark}

\begin{exmp}
We mention a few other examples that we shall need.

\begin{enumerate}
    \item (Product Stratification) If $X$ and $Y$ are stratified sets, then $X\times Y$ is a stratified set, by declaring $(X\times Y)^{(i)}=\bigcup_{a+b=i}X^{(a)}\times Y^{(b)}$. 
    \item (Cone Stratification) If $X$ is a compact stratified set, then $cX$ admits a stratification by defining $(cX)^{(-1)}=\varnothing$, $(cX)^{(0)}=vertex$, and $(cX)^{(i)}=c(X^{(i-1)})$ for $i\geq 1$.

\end{enumerate}

\end{exmp}

Before beginning with the notion of a CS set, we recall the notion of a isomorphism between stratified sets \cite[p. 127]{siebenmann1972deformation}.

\begin{definition}
Let $X$ and $Y$ be stratified sets. An isomorphism $h\colon X\rightarrow Y$  is a homeomorphism such that $h(X^{(n)})=Y^{(n)}$ for all $n\geq 0$. We say that two stratified sets $X$ and $Y$ are isomorphic if there exists an isomorphism between them.
\end{definition}

Now we define the notion of a CS set.

\begin{definition}\label{Definition: CS sets}
  A stratified set $X$ is a CS set if

  \begin{enumerate}
      \item The strata $X[i]$ are topological manifolds of dimension $i$ without boundary.
      \item For each $p\in X$, say in $X[i]$, there exists an open neighbourhood $U$ of $p$ in $X[i]$, and a compact stratified set $L$ and an isomorphism $U\times cL\rightarrow X$ onto a neighbourhood of $p$. The space $L$ is called a link of $p$ in $X$.
  \end{enumerate}
\end{definition}

\begin{remark}
Strictly speaking, in the definition of a CS set, one requires the link $L$ to have \textit{finite formal dimension}. Though since we will be dealing with MCS spaces, this will not concern us. 
\end{remark}

\begin{remark}

\label{links-need-not-be-cs}
One should note that the stratification on $L$ may not be a CS stratification itself. In particular, Siebenmann intentionally did not assume that the stratification of the link $L$ in the definition of a CS set is stratified by manifolds (see \cite[p. 128]{siebenmann1972deformation}).

\end{remark}

\begin{definition}
\label{MCS space}
A metrizable space $X$ is \textit{MCS of dimension $n$} if for every point $p\in X$, there exists a neighbourhood $U_p$ of $p$, a compact MCS space $L$ of dimension $n-1$ and a pointed homeomorphism $(U_p,p)\approx (cL,o)$. The unique MCS space of dimension $-1$ is the empty set.
\end{definition}

Recall, the cone over the empty set is a point. A $0$-dimensional compact MCS space is a finite set of points.

\begin{remark}
Note that the link $L$ in the definition above is metrizable. Therefore $cL$ is metrizable by a metric $d$  such that $d(\alpha y,\alpha z)= \alpha d(y,z)$ for all $\alpha \in [0,\infty)$ and $y,z\in cL$ and $d(\alpha y, \beta y)= |\alpha - \beta||y|$ for any $\alpha, \beta \in [0,\infty)$ and $y\in cL$   \cite[Proposition 3]{gauld}.
\end{remark}

The MCS property is closed under natural operations, as the next example illustrates (cf. \cite[p. 6]{perelman1991alexandrov}) 

\begin{exmp}\label{Example: cone-over-mcs-is-mcs and product-of-mcs-is-mcs}
MCS spaces can be constructed via the following means:
\begin{itemize}
    \item[(1)] (Joins) The join of two compact MCS spaces is MCS.
    \item[(2)] (Products) If $X$ and $Y$ are MCS, then $X\times Y$ is MCS.
    \item[(3)] (Cones) The cone over a compact MCS space is MCS. 
    \item[(4)] (Open Subsets) If $X$ is MCS and $U$ is an open subset of $X$, then $U$ is MCS.

\end{itemize}
\end{exmp}

We mention the following elementary property.

\begin{exmp}[MCS is preserved under homeomorphisms]
If $X$ is an MCS space and $f\colon X\rightarrow Y$ is a homeomorphism, then $Y$ is an MCS space too.

\end{exmp}

Now we recall the definition of a homology manifold.

\begin{definition}[Homology Manifold]
\label{homology-manifold}
A metrizable space $X$ is a homology $n$-manifold if $X$ is locally compact, separable, of finite topological dimension, and for each $x\in X$, $H_{*}(X,X-x)\cong H_{*}(\mathbb{R}^n,\mathbb{R}^n-o)$.
\end{definition}

\section{Main Tool}
We now introduce the main tool of this paper and study its properties which we shall use.

\begin{definition}\label{Definition: Index invarient}
Let $X$ be a locally cone-like finite dimensional metric space. Let $x\in X$. Define $\mathfrak{I}(X,x)$ to be the  supremum of all $m\in \mathbb{N}$ such that there exists a neighbourhood $U_x$ of $x$, a compact MCS space $L$, and a pointed homeomorphism  $\theta_x\colon (U_x,x)\rightarrow (\mathbb{R}^m\times cL,o)$. We shall often refer to $\theta_x$ as the \emph{associated homeomorphism}.

\end{definition}
\begin{remark}
If no such $m$ exists, then $\mathfrak{I}(X,x)=-\infty$.

\end{remark}

The quantity $\mathfrak{I}$  serves as an invariant for MCS spaces, which also detects to some extent the MCS singularities of locally cone-like spaces. This quantity is analogous to Armstrong's notion of an intrinsic dimension at a point \cite{armstrong} in the PL category. Handel used a suitable generalization of the intrinsic dimension \cite{handel} to define the notion of an associated (intrinsic) filtration for a locally cone-like space. The salient difference between the other invariants and ours is that in ours (at least a priori), the links are required to be MCS. Furthermore, $\mathfrak{I}$ turns out to be the defining element for the MCS strata. In general, for a locally cone-like finite dimensional metrizable space $X$, $\mathfrak{I}$ does not coincide with Handel's (see Example \ref{invariants-don't-agree}). Since $\mathfrak{I}$ will be our main tool, we will take the opportunity to study it. As a starting point, we prove the following result.

\begin{lemma}
\label{not-manifold-cone}

Let $X$ be a closed $n$-manifold, where $n\geq 1$. The following are equivalent.

\begin{enumerate}
    \item $cX$ is not a manifold.
    \item $\mathfrak{I}(cX,o)=0$. 
    \item $\mathfrak{I}(cX,o)\leq n$.
\end{enumerate}
\end{lemma}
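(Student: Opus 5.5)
The plan is to prove the cycle (1)$\Rightarrow$(2)$\Rightarrow$(3)$\Rightarrow$(1). Two preliminary facts will be used throughout. First, the value of $\mathfrak{I}(cX,o)$ is at least $0$: a closed manifold is an MCS space, since every point of a manifold has a neighbourhood homeomorphic to $\mathbb{R}^k \approx c\mathbb{S}^{k-1}$, and spheres are MCS by induction on dimension. So the identity presentation $(cX,o)\approx(\mathbb{R}^0\times cX,o)$ is admissible. Second, dimension counting gives an upper bound: $cX$ has covering dimension $n+1$, and $\mathbb{R}^m\times cL$ has dimension $m+1+\dim L$ (with $cL$ a point if $L=\varnothing$), so any admissible $m$ satisfies $m\le n+1$.

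(2)$\Rightarrow$(3) is trivial because $n\ge1$. For (3)$\Rightarrow$(1), argue by contraposition. If $cX$ is a manifold, it is an $(n+1)$-manifold (it is locally the open manifold $X\times(0,1)$ away from $o$), so a neighbourhood of $o$ is homeomorphic to $\mathbb{R}^{n+1}=\mathbb{R}^{n+1}\times c\varnothing$. Since $\varnothing$ is the compact MCS space of dimension $-1$, this gives $\mathfrak{I}(cX,o)\ge n+1>n$.

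For (1)$\Rightarrow$(2), suppose $\mathfrak{I}(cX,o)=m\ge1$, witnessed by $\theta:(U,o)\to(\mathbb{R}^m\times cL,o)$. We want to show $cX$ is a manifold. The key step is comparing local homology at $o$ with that of nearby points. Points of $U\setminus\{o\}$ are manifold points of dimension $n+1$. Under $\theta$, the points $(t,o)$ with $t\neq0$ in $\mathbb{R}^m$ are such points. Near $(t,o)$ the space looks like $\mathbb{R}^m\times cL$ again, so $\mathbb{R}^m\times cL$ is an $(n+1)$-manifold near $(t,o)$. Hence $\mathbb{R}^m\times cL \approx \mathbb{R}^{m}\times c L$ is a homology $(n+1)$-manifold near the vertex line. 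In particular $cL$ is a homology manifold of dimension $n+1-m$, and $L$ is a homology sphere: $\tilde H_*(L)\cong \tilde H_*(\mathbb{S}^{n-m})$ by the Künneth formula and the cone's local homology.

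This also shows that $o$ itself, the point $(0,o)$, is a homology manifold point of $cX$. Then the link $X$ has the homology of $\mathbb{S}^n$, since $H_*(cX,cX-o)\cong\tilde H_{*-1}(X)$.

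This is not yet enough to conclude that $cX$ is a manifold, which is the main obstacle. For that, use the product structure. Near $o$, $cX\approx\mathbb{R}^m\times cL$ with $m\ge1$. Since $\mathbb{R}^m\times cL \approx \mathbb{R}^{m-1}\times(\mathbb{R}\times cL)$ and $\mathbb{R}\times cL\approx c(\mathrm{Susp} L)$, we get $\mathbb{R}^m\times cL \approx c(\mathbb{S}^{m-1}\ast L)$. Comparing the links of the cone point, and using that links in a cone structure are determined up to $h$-cobordism/stabilization, we get $X\times\mathbb{R}\approx(\mathbb{S}^{m-1}\ast L)\times\mathbb{R}$ (deleting the vertex). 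Now $X$ is a closed manifold, so $\mathbb{S}^{m-1}\ast L$ is a homology-manifold homology sphere that is a manifold factor. The vertex-deleted space $\mathbb{R}^m\times cL\setminus\{o\}$ being a manifold forces the $(m-1)$-fold suspension of $L$ to be a manifold.

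From here, I would then invoke the double suspension theorem of Edwards–Cannon type (if $m\ge2$), or the fact that $X$ is a simply connected homology sphere manifold (check $\pi_1(X)=\pi_1(X\times\mathbb{R})=\pi_1$ of the punctured $\mathbb{R}^m\times cL$, which is trivial when $m\ge 2$ by general position, and when $m=1$ follows from $cL$ being contractible and the suspension structure). Then the generalized Poincaré conjecture gives $X\approx\mathbb{S}^n$, so $cX\approx\mathbb{R}^{n+1}$ is a manifold. This contradicts (1), so $m=0$. The delicate points I expect are the fundamental group computation in the case $m=1$ and handling low dimensions $n\le 4$ carefully.
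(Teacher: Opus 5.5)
The gap is in (1)$\Rightarrow$(2). The ``main obstacle'' you identify is not actually there, and you miss the one-line idea that removes it, even though you half-state it. Your (2)$\Rightarrow$(3) and (3)$\Rightarrow$(1) are fine; taking a chart $(U,o)\approx(\mathbb{R}^{n+1}\times c\varnothing,o)$ is even more direct than the paper's appeal to uniqueness of cone neighbourhoods.

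The missing idea is this. The translation $\tau(v,z)=(v-t,z)$ is a self-homeomorphism of $\mathbb{R}^m\times cL$ taking $(t,o)$ to $(0,o)$. For $t\neq 0$, the point $\theta^{-1}(t,o)$ lies in $U\setminus\{o\}$. That is an open subset of the $(n+1)$-manifold $cX\setminus\{o\}\approx X\times(0,\infty)$, so $\theta^{-1}(t,o)$ has a neighbourhood homeomorphic to $\mathbb{R}^{n+1}$. The map $\theta^{-1}\circ\tau\circ\theta$ carries a small such neighbourhood onto a Euclidean neighbourhood of $o$. So $o$ is a genuine manifold point, $cX$ is a manifold, and (1) is contradicted. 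This is precisely the paper's proof; the paper first passes to a global homeomorphism $cX\approx\mathbb{R}^m\times cL$ via Stallings' technique, which is not really needed. You used this homogeneity only to transport local homology. That left you having to upgrade ``homology manifold point'' to ``manifold point'' by global means.

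That upgrade, as written, breaks in several places.

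\begin{itemize}
\item You claim that $\mathbb{R}^m\times cL\setminus\{o\}\approx(\mathbb{S}^{m-1}\ast L)\times\mathbb{R}$ being a manifold forces the suspension of $L$ to be a manifold. (Note that $\mathbb{S}^{m-1}\ast L$ is the $m$-fold suspension, not the $(m-1)$-fold one.) This is false, because manifold factors need not be manifolds. Take $X=\mathbb{S}^4$, $m=1$ and $L=P$ the Poincar\'e sphere: then $\mathbb{R}\times cP\approx c(\mathrm{Susp}\,P)\approx\mathbb{R}^5$ by the double suspension theorem, yet $\mathrm{Susp}\,P$ is not a manifold.
\item The double suspension theorem cannot be applied to $L$, which you only know to be a homology manifold.
\item The $\pi_1$ branch can be rescued for $n\geq 2$, but not for the reasons you give. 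The space $\mathbb{S}^{m-1}\ast L$ is simply connected by van Kampen once $L$ is path-connected, and that follows from $\tilde H_0(L)=0$. It does not follow from general position or from contractibility of $cL$.
\item For $n=1$ your justification gives the wrong answer. For example, $m=1$, $L=\mathbb{S}^0$ gives $\mathrm{Susp}\,\mathbb{S}^0=\mathbb{S}^1$. That case needs separate handling, e.g.\ the homology computation already makes $X$ connected, hence a circle.
\end{itemize}

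Even once repaired, this route spends Kwun's uniqueness of cone neighbourhoods and the generalized Poincar\'e conjecture in all dimensions on what is just a translation along the $\mathbb{R}^m$ factor.
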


\begin{proof}
We first shall show that $2\implies 1$ by proving the contrapositive statement. Indeed, if $cX$ is a manifold, then by uniqueness of cone-neighborhoods \cite{Kwun-Uniqueness-of-Cone-Neighborhoods} (cf. \cite{mazur-method-of-infinite-repitition}) $cX$ must be homeomorphic to euclidean space. This gives the implication. As for the forward implication, assume for the sake of obtaining a contradiction that $\mathfrak{I}(cX,o)=m\geq 1$. Thus, there exists an open neighborhood $U$ of $o$ in $cX$, a compact MCS set $L$ and a homeomorphism $(U_o,o)\approx (\mathbb{R}^m\times cL,o)$. Since $\mathbb{R}^m\times cL$ is naturally an open cone, it follows from Stalling's invertible cobordism technique (cf. \cite[Proposition 1]{king-henry}) that $(cX,o)$ is homeomorphic to $(\mathbb{R}^m\times cL,o)$. Let $h\colon (cX,o)\rightarrow (\mathbb{R}^m\times cL,o)$ be a homeomorphism. Since $X$ is a manifold, it follows that $cX-o\approx X\times (0,\infty)$ is a manifold. Hence the only non-manifold point of $cX$ is the vertex. Therefore if we fix any $v\in \mathbb{R}^m\backslash o$, $h^{-1}(v,o)$ is a manifold point. Define $\tau \colon \mathbb{R}^m\times cL\rightarrow \mathbb{R}^m\times cL$ by $\tau(x,l)=(x-v,l)$. Then, $\tau(v,o)=o$ . Now consider $\tau \circ h\colon cX\rightarrow \mathbb{R}^m\times cL$. Then, $\tau h(h^{-1}(v,o))=o$. This implies that $h^{-1}(v,o)$ is not a manifold point, contradiction.  Clearly, $(2)\implies (3)$. Thus it suffices to show that $(3)\implies (1)$. Assume $\mathfrak{I}(cX,o)\leq n$ and that $cX$ is a manifold. Once again, this implies that $cX$ is homeomorphic to euclidean space. Since $X$ has dimension $n$, this implies that $cX$ is homeomorphic to $\mathbb{R}^{n+1}$. Therefore, $\mathfrak{I}(cX,o)=n+1$. Contradiction.

\end{proof}

\begin{exmp}\label{cone-over-rp^2}
Let $X= \mathbb{R}P^2$. Consider $cX$, the open cone over $X$. Let $o$ be the vertex of $cX$. Then, $\mathfrak{I}(cX,o)=0$.  However for $z\neq o$, $\mathfrak{I}(cX,z)\geq 1$. Hence $\mathfrak{I}$ is \textit{not} continuous in general. By continuous we mean that if $x_i\rightarrow x$ then $\mathfrak{I}(X,x_i)\rightarrow \mathfrak{I}(X,x)$.

\end{exmp}

The preceding example is useful to keep in mind when going over the proof of the main theorem. In addition to illustrating the failure of continuity of $\mathfrak{I}$, it demonstrates that singular points will have value strictly smaller than non-singular points.

The next lemma, together with its proof will be used often and implicitly. 

\begin{lemma}
\label{adjusting-lemma}
Assume $X$ is an MCS space. Let $p\in X$, and $U_p$ is a neighbourhood of $p$ in $X$, and $L$ is a compact space. If there exists an open embedding $\varphi\colon (U_p,p)\rightarrow (cL,o)$, then there exists an open neighbourhood $V_p$ of $p$ in $U_p$, and a homeomorphism $(V_p,p)\rightarrow (cL,o)$.

\end{lemma}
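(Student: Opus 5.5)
The plan is to shrink the cone $cL$ radially until it lands inside the image of $\varphi$, and then pull back. Since $\varphi$ is an open embedding with $\varphi(p)=o$, the image $\varphi(U_p)$ is an open subset of $cL$ containing the vertex $o$. Equip $cL$ with the metric from the remark following Definition \ref{MCS space} (via \cite[Proposition 3]{gauld}). In this metric the closed ball about $o$ of radius $r$ is $\{\alpha y : \alpha|y|\le r\}$, and every neighbourhood of $o$ contains a sub-cone.

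The key steps are as follows.

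\textbf{Step 1.} Because $L$ is compact, the sets
\[
c_\varepsilon L=\{[y,t] : t<\varepsilon\}\subseteq cL, \qquad \varepsilon>0,
\]
form a neighbourhood basis of $o$. Here we use the standard description of $cL$ as $L\times[0,\infty)/L\times\{0\}$ with the quotient topology. This topology agrees with the metric topology because $L$ is compact. By the tube lemma applied to the compact set $L\times\{0\}$, there is $\varepsilon>0$ with $c_\varepsilon L\subseteq \varphi(U_p)$.

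\textbf{Step 2.} The radial rescaling $[y,t]\mapsto [y,\, t/(\varepsilon-t)]$ is a homeomorphism $\psi\colon (c_\varepsilon L,o)\to(cL,o)$. It is well defined at the vertex, and it is continuous with continuous inverse $s\mapsto \varepsilon s/(1+s)$.

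\textbf{Step 3.} Set $V_p=\varphi^{-1}(c_\varepsilon L)$. This is an open neighbourhood of $p$ in $U_p$, since $\varphi$ is continuous and $c_\varepsilon L$ is open in $cL$. Because $\varphi$ is an open embedding, its restriction $\varphi|_{V_p}\colon V_p\to c_\varepsilon L$ is a homeomorphism onto $c_\varepsilon L$. The composite $\psi\circ\varphi|_{V_p}$ is then the required pointed homeomorphism $(V_p,p)\to(cL,o)$.

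The only point needing care is Step 1: checking that the sub-cones $c_\varepsilon L$ really form a neighbourhood basis of the vertex. This is exactly where compactness of $L$ enters, through the tube lemma; for noncompact $L$ the quotient topology and the metric topology differ. The MCS hypothesis on $X$ plays no role beyond the setting, and the proof works for any space and any compact $L$.
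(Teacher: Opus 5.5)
Your proposal is correct and follows essentially the same route as the paper: find a truncated cone $c_\varepsilon L\subseteq\varphi(U_p)$, restrict $\varphi$ to its preimage, and reparametrize the radial coordinate. You also supply the details the paper leaves implicit, namely the tube-lemma argument (using compactness of $L$) for the existence of $\varepsilon$ and the explicit rescaling $t\mapsto t/(\varepsilon-t)$.
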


\begin{proof}
Consider $\varphi(U_p)$. Since $\varphi$ is an open map, $\varphi(U_p)$ is an open subset of $cL$ containing $o$. Thus there exists an $r>0$ such that $c_rL\subseteq \varphi(U_p)$. Here, $c_rL$ denotes the truncated open cone of height $r$. Thus consider the preimage $\varphi^{-1}(c_rL)$. The restriction
\[
\varphi|_{\varphi^{-1}(c_rL)}\colon \varphi^{-1}(c_rL)\rightarrow c_rL 
\]
is a pointed homeomorphism taking $p$ to $o$.  By a suitable parameterization of the radial coordinate, we obtain the conclusion.

\end{proof}

Let us mention a few more properties concerning $\mathfrak{I}$, which, among other things, show the subtleties that arise with $\mathfrak{I}$.

\begin{lemma}
\label{invariant-under-products}

Let $X,Y$ be MCS spaces. Then for all $x\in X$, and $y\in Y$, 

\[\mathfrak{I}(X\times Y,(x,y))\geq \mathfrak{I}(X,x)+ \mathfrak{I}(Y,y).\]

\end{lemma}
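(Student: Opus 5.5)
The plan is to take near-optimal charts at $x$ and $y$, multiply them, and recognize the product of the two cones as a single cone over a join. This is the same mechanism that makes products of MCS spaces MCS in Example \ref{Example: cone-over-mcs-is-mcs and product-of-mcs-is-mcs}.

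First, dispose of the degenerate cases. Since $X$ and $Y$ are MCS, every point has a neighbourhood homeomorphic to $cL$ with $L$ a compact MCS space. This is a chart with $m=0$, so $\mathfrak{I}(X,x)\geq 0$ and $\mathfrak{I}(Y,y)\geq 0$, and neither side is $-\infty$. The same remark shows $X\times Y$ is MCS, hence locally cone-like and finite dimensional, so $\mathfrak{I}(X\times Y,(x,y))$ is defined.

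For the main case, let $m$ and $k$ be natural numbers realised at $x$ and $y$ respectively. If the suprema are finite they are attained, since they are suprema of sets of naturals; if one is infinite, argue with arbitrarily large $m,k$. We then have associated homeomorphisms
\[
\theta_x\colon (U_x,x)\to(\mathbb{R}^m\times cL_1,o),\qquad \theta_y\colon (U_y,y)\to(\mathbb{R}^k\times cL_2,o),
\]
with $L_1,L_2$ compact MCS spaces. The product $\theta_x\times\theta_y$ maps the neighbourhood $U_x\times U_y$ of $(x,y)$ homeomorphically onto
\[
\mathbb{R}^{m+k}\times (cL_1\times cL_2),
\]
sending $(x,y)$ to $(0,(o,o))$.

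The key step is the standard homeomorphism $(cL_1\times cL_2,(o,o))\approx (c(L_1\ast L_2),o)$. Concretely, write a point as $(s a, t b)$ with $a\in L_1$, $b\in L_2$ and $s,t\geq 0$. Set the radius to $r=\max(s,t)$, or $\sqrt{s^2+t^2}$, and let the join coordinate be $(a,b,\tfrac{2}{\pi}\arctan(t/s))$. The coordinates $a$ or $b$ become irrelevant exactly when $s=0$ or $t=0$, which matches the join identifications. Continuity and bijectivity are routine, and one checks it is a homeomorphism, using compactness of $L_1,L_2$ near the vertex. By Example \ref{Example: cone-over-mcs-is-mcs and product-of-mcs-is-mcs}(1), $L_1\ast L_2$ is a compact MCS space. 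If one of $L_i$ is empty the join is simply the other link, and the identification is trivial.

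Combining these gives a pointed homeomorphism from a neighbourhood of $(x,y)$ onto $\mathbb{R}^{m+k}\times c(L_1\ast L_2)$ with a compact MCS link. Hence $\mathfrak{I}(X\times Y,(x,y))\geq m+k$, and taking suprema over $m$ and $k$ yields the inequality. The only real obstacle is the cone–join identification: getting a clean formula and verifying it is a homeomorphism at the vertex. Everything else is bookkeeping, including the trivial behaviour when $\mathfrak{I}=\infty$.
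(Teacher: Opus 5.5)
Your proposal is correct and matches the paper's proof: take the product of the two associated homeomorphisms, regroup the Euclidean factors into $\mathbb{R}^{m+k}$, and identify $cL_1\times cL_2$ with $c(L_1\ast L_2)$, which is a cone over a compact MCS space. The paper compresses this into ``taking products and rearranging'' (and simply asserts the suprema are attained), whereas you write out the cone--join homeomorphism and the attainment of the suprema explicitly.
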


\begin{proof}
Put $m= \mathfrak{I}(X,x)$ and $n= \mathfrak{I}(Y,y)$. Since $X$ is MCS, $\mathfrak{I}(X,x)$ is a maximum (and similarly for $\mathfrak{I}(Y,y)$). Hence there exists conical charts $(U_x,x)\rightarrow (\mathbb{R}^m\times cL,o)$ and $(V_y,y)\rightarrow (\mathbb{R}^n\times c(L'),o)$. Taking products and rearranging gives the result.

\end{proof}

\begin{exmp}[Reverse Inequality] The inequality 

\[\mathfrak{I}(X\times Y,(x,y))\leq \mathfrak{I}(X,x)+ \mathfrak{I}(Y,y)\]

need not hold. Indeed, let $X$ be the suspension over the Poincaré homology sphere. That is, $X= \mathrm{Susp}(P)$ and let $Y= \mathbb{R}$. By the double suspension theorem, it follows that $\mathrm{Susp}(P) \times \mathbb{R}$ is a $5$-manifold. Hence for any $z\in X\times Y$, $\mathfrak{I}(X\times Y,z)=5$.  Now, let $N$ be the north pole of $\mathrm{Susp}(P)$, then $\mathfrak{I}(\mathrm{Susp}(P),N)=0$ and $\mathfrak{I}(\mathbb{R},1)=1$.

\end{exmp}

We shall use the following two lemmas repeatedly, and tacitly.

\begin{lemma}\label{Lemma: I preserved under homeo's}
Let $X$ and $Y$ be MCS spaces. If $f\colon X\rightarrow Y$ is a homeomorphism, then for all $x\in X$, $\mathfrak{I}(X,x)=\mathfrak{I}(Y,f(x))$.

\end{lemma}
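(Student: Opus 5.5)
The invariant is defined purely in terms of the local topology of a space at a point, so the plan is to transport charts along $f$. Fix $x\in X$ and put $y=f(x)$. It suffices to prove $\mathfrak{I}(X,x)\leq \mathfrak{I}(Y,y)$; applying the same argument to the homeomorphism $f^{-1}\colon Y\rightarrow X$ at the point $y$ gives the reverse inequality, hence equality.

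For the inequality, first note that both quantities are finite and attained. Since $X$ is MCS, $x$ has a neighbourhood pointed homeomorphic to $(cL,o)=(\mathbb{R}^0\times cL,o)$ with $L$ a compact MCS space, so $\mathfrak{I}(X,x)\geq 0$. The set of admissible $m$ is bounded above by the dimension of $X$, since $\mathbb{R}^m\times cL$ contains an open subset homeomorphic to $\mathbb{R}^m$ and $X$ is finite dimensional. Hence the supremum is a maximum; the proof of Lemma \ref{invariant-under-products} uses this same fact. Let $m=\mathfrak{I}(X,x)$ and take a neighbourhood $U_x$ of $x$, a compact MCS space $L$, and a pointed homeomorphism $\theta_x\colon (U_x,x)\rightarrow(\mathbb{R}^m\times cL,o)$.

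Now set $V=f(U_x)$. Because $f$ is a homeomorphism, $V$ is a neighbourhood of $y$ in $Y$; if $U_x$ is open then $V$ is open, and in general $f$ maps the interior of $U_x$ onto the interior of $V$. The composite $\theta_x\circ f^{-1}|_V\colon (V,y)\rightarrow(\mathbb{R}^m\times cL,o)$ is a pointed homeomorphism whose target has exactly the required form, with the same compact MCS link $L$. Hence $m$ is admissible in the definition of $\mathfrak{I}(Y,y)$, and $\mathfrak{I}(Y,y)\geq m$.

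There is essentially no obstacle here. The only points needing care are bookkeeping: checking that the neighbourhood condition in Definition \ref{Definition: Index invarient} is preserved under $f$, which holds because homeomorphisms preserve neighbourhoods, and that the link $L$ is unchanged, so it remains a compact MCS space. Finiteness of the invariant is only needed to phrase the result as an equality of numbers. Even without it, the argument shows that the two sets of admissible $m$ coincide, so their suprema agree.
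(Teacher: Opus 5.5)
Your proof is correct and follows the paper's argument: you transport the chart via $\theta_x\circ f^{-1}|_{f(U_x)}$ to get $\mathfrak{I}(Y,f(x))\geq\mathfrak{I}(X,x)$, then apply the same argument to $f^{-1}$. One small slip in your side remark on finiteness: $\mathbb{R}^m\times cL$ need not contain an \emph{open} subset homeomorphic to $\mathbb{R}^m$ (take $L=\mathbb{S}^0$, so $\mathbb{R}^m\times cL\approx\mathbb{R}^{m+1}$); the subset $\mathbb{R}^m\times o$ suffices by monotonicity of covering dimension in metrizable spaces, and your observation that the two sets of admissible $m$ coincide makes finiteness unnecessary anyway.
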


\begin{proof}
Let $m= \mathfrak{I}(X,x)$. Let $U_x$ be a neighbourhood of $x$, and let $L$ be a compact MCS space, and 
\[
\theta_x\colon (U_x,x)\rightarrow (\mathbb{R}^m\times cL,o)
\]
the associated pointed homeomorphism such that $\theta_x(x)=o$. Now consider
\[
\theta_x\circ f^{-1}\biggr|_{f(U_x)} \colon (f(U_x),f(x))\rightarrow (\mathbb{R}^m\times cL,o)
. \] It follows that $\mathfrak{I}(Y,f(x))\geq m$. Applying the same argument to $\mathfrak{I}(Y,f(x))$ yields the result.

\end{proof}
The previous Lemma implies that $\mathfrak{I}$ is invariant under homeomorphisms. The converse is not true. 

\begin{exmp}
Let $X= \mathbb{S}^2$ and $Y= \mathbb{T}^2$. Then for each $x\in X$ and $y\in Y$, $\mathfrak{I}(X,x)= \mathfrak{I}(Y,y)$. 

\end{exmp}

We also have the following simple lemma.

\begin{lemma}
\label{preserved-under-open-sets}

If $X$ is an MCS space and $U$ is an open subset of $X$, then for every $u\in U$, $\mathfrak{I}(U,u)=\mathfrak{I}(X,u)$.

\end{lemma}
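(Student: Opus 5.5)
We prove the two inequalities $\mathfrak{I}(U,u)\geq \mathfrak{I}(X,u)$ and $\mathfrak{I}(U,u)\leq \mathfrak{I}(X,u)$ separately, following the pattern of the proof of Lemma \ref{Lemma: I preserved under homeo's}. By Example \ref{Example: cone-over-mcs-is-mcs and product-of-mcs-is-mcs}(4), $U$ is itself an MCS space (in particular locally cone-like and finite dimensional), so $\mathfrak{I}(U,u)$ is defined.

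The inequality $\mathfrak{I}(U,u)\leq \mathfrak{I}(X,u)$ is immediate. Suppose $V_u$ is a neighbourhood of $u$ in $U$, $L$ is a compact MCS space, and $\theta\colon (V_u,u)\rightarrow(\mathbb{R}^m\times cL,o)$ is a pointed homeomorphism. Since $U$ is open in $X$, $V_u$ is also a neighbourhood of $u$ in $X$. Hence the same $\theta$ witnesses $m$ in the supremum defining $\mathfrak{I}(X,u)$.

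For the reverse inequality, put $m=\mathfrak{I}(X,u)$. Since $X$ is MCS, the supremum is attained, as noted in the proof of Lemma \ref{invariant-under-products}. So there is a neighbourhood $U_u$ of $u$ in $X$ and a pointed homeomorphism $\theta\colon (U_u,u)\rightarrow (\mathbb{R}^m\times cL,o)$ with $L$ a compact MCS space. We may shrink $U_u$ to be open, replacing it by an open neighbourhood on which $\theta$ remains a homeomorphism onto an open subset containing $o$. Then $U_u\cap U$ is an open neighbourhood of $u$, and $\theta(U_u\cap U)$ is an open subset of $\mathbb{R}^m\times cL$ containing $o$. Now note that $\mathbb{R}^m\times cL\approx c(\mathbb{S}^{m-1}\ast L)$ as pointed spaces, and $\mathbb{S}^{m-1}\ast L$ is compact. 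So $\theta|_{U_u\cap U}$ is an open embedding into an open cone. Lemma \ref{adjusting-lemma} (applied to the MCS space $U$) then gives a smaller open neighbourhood $V_u\subseteq U_u\cap U$ of $u$ and a pointed homeomorphism $(V_u,u)\rightarrow (c(\mathbb{S}^{m-1}\ast L),o)\approx(\mathbb{R}^m\times cL,o)$.

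Alternatively, and more concretely, one can choose $\epsilon,r>0$ with $B_\epsilon(0)\times c_rL\subseteq\theta(U_u\cap U)$ and reparametrize radially: $B_\epsilon(0)\approx\mathbb{R}^m$ and $c_rL\approx cL$, both fixing the origin. Either way $\mathfrak{I}(U,u)\geq m$.

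The only point needing care is this last rescaling step, namely that a small product neighbourhood of $o$ in $\mathbb{R}^m\times cL$ is pointed-homeomorphic to the whole of $\mathbb{R}^m\times cL$. This follows from the conical metric on $cL$ described in the remark after Definition \ref{MCS space}, so I expect no real obstacle.
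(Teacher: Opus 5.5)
Your proposal is correct and follows the paper's route. The paper's proof is just an appeal to Lemma \ref{adjusting-lemma}. You have made explicit the trivial inequality $\mathfrak{I}(U,u)\leq\mathfrak{I}(X,u)$ and the reduction of the product chart to a cone chart, via $\mathbb{R}^m\times cL\approx c(\mathbb{S}^{m-1}\ast L)$ or equivalently by shrinking to $B_\epsilon(0)\times c_rL$ and rescaling. That shrinking step needs only the tube lemma for compact $L$, not the conical metric.
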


\begin{proof}
This follows from Lemma \ref{adjusting-lemma}.

\end{proof}

Combining the previous results, one deduces the following.

\begin{corollary}
\label{preserved-under-local-homeos}
Let $X$ and $Y$ be MCS spaces. Assume $f\colon X\rightarrow Y$ is a local homeomorphism. If $x\in X$, then $\mathfrak{I}(X,x)= \mathfrak{I}(Y,f(x))$.

\end{corollary}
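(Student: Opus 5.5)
The plan is to reduce the statement to Lemma \ref{Lemma: I preserved under homeo's} and Lemma \ref{preserved-under-open-sets} by localizing the map $f$. Fix $x\in X$. Since $f$ is a local homeomorphism, there is an open neighbourhood $W$ of $x$ in $X$ such that $f(W)$ is open in $Y$ and $f|_W\colon W\to f(W)$ is a homeomorphism.

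First, $W$ and $f(W)$ are open subsets of the MCS spaces $X$ and $Y$, so both are MCS spaces by Example \ref{Example: cone-over-mcs-is-mcs and product-of-mcs-is-mcs}(4). This is needed because Lemma \ref{Lemma: I preserved under homeo's} is stated only for MCS spaces. Second, Lemma \ref{preserved-under-open-sets} gives $\mathfrak{I}(W,x)=\mathfrak{I}(X,x)$ and $\mathfrak{I}(f(W),f(x))=\mathfrak{I}(Y,f(x))$. Third, Lemma \ref{Lemma: I preserved under homeo's}, applied to the homeomorphism $f|_W\colon W\to f(W)$, gives $\mathfrak{I}(W,x)=\mathfrak{I}(f(W),f(x))$. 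Chaining these three equalities yields $\mathfrak{I}(X,x)=\mathfrak{I}(Y,f(x))$.

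No step is a real obstacle. The only point to check is that the openness of $f(W)$ is built into the definition of a local homeomorphism, so that Lemma \ref{preserved-under-open-sets} applies on the $Y$ side. If one uses a weaker definition, one can instead shrink $W$ using the fact that a local homeomorphism is an open map. The substantive content of Lemma \ref{preserved-under-open-sets} is that a chart $(U_x,x)\to(\mathbb{R}^m\times cL,o)$ defined on a large open set can be shrunk to lie in a smaller one. That lemma handles this via Lemma \ref{adjusting-lemma}, applied to the open cone $\mathbb{R}^m\times cL\cong c(\mathbb{S}^{m-1}\ast L)$ with the radial reparametrization, so it can be used as a black box here.
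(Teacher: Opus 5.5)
Your proposal is correct and is the same argument the paper intends. The paper proves this corollary in one line, ``combining the previous results'': restrict $f$ to an open set on which it is a homeomorphism onto an open image, then chain Lemma \ref{preserved-under-open-sets} and Lemma \ref{Lemma: I preserved under homeo's}.
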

We now look at how the invariant $\mathfrak{I}$ behaves under cones.

\begin{lemma}
If $X$ is a compact MCS space, then $\mathfrak{I}(cX,(x,t))\geq \mathfrak{I}(X,x)+1$ for all $x\in X$ and $t>0$.

\end{lemma}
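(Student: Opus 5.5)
The plan is to exhibit an explicit chart at $(x,t)$ in $cX$ with one more Euclidean factor than an optimal chart at $x$ in $X$. Put $m=\mathfrak{I}(X,x)$. Since $X$ is MCS, $x$ has a conical neighbourhood $cL'$ with $L'$ compact MCS, so $m\geq 0$ is attained (as noted in the proof of Lemma \ref{invariant-under-products}, $\mathfrak{I}$ is a maximum for MCS spaces, being bounded by the dimension). Fix a neighbourhood $U_x$ of $x$ in $X$, a compact MCS space $L$, and an associated homeomorphism $\theta_x\colon (U_x,x)\rightarrow(\mathbb{R}^m\times cL,o)$.

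First I use the product structure of the cone away from the vertex. The set $X\times(0,\infty)$ is homeomorphic to $cX-o$, which is open in $cX$. Since $t>0$, the set $W=U_x\times(t/2,2t)$ is an open neighbourhood of $(x,t)$ in $cX$. Choose a homeomorphism $\sigma\colon((t/2,2t),t)\rightarrow(\mathbb{R},0)$. Then
\[
\Theta=(\theta_x\times\sigma)\colon (W,(x,t))\rightarrow (\mathbb{R}^m\times cL\times\mathbb{R},(o,0))
\]
is a pointed homeomorphism. Rearranging factors gives $(W,(x,t))\approx(\mathbb{R}^{m+1}\times cL,o)$, and $L$ is compact MCS. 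By Definition \ref{Definition: Index invarient}, this yields $\mathfrak{I}(cX,(x,t))\geq m+1$.

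It remains to check that $\mathfrak{I}(cX,\cdot)$ is defined, i.e.\ that $cX$ is a locally cone-like finite-dimensional metrizable space. This holds because $cX$ is MCS by Example \ref{Example: cone-over-mcs-is-mcs and product-of-mcs-is-mcs}(3). Alternatively, one can phrase the argument as $\mathfrak{I}(cX,(x,t))=\mathfrak{I}(X\times(0,\infty),(x,t))\geq \mathfrak{I}(X,x)+\mathfrak{I}((0,\infty),t)=\mathfrak{I}(X,x)+1$. This uses Lemma \ref{preserved-under-open-sets}, Lemma \ref{invariant-under-products}, and $\mathfrak{I}(\mathbb{R},s)=1$, where the last equality holds since a $1$-manifold admits no chart with $m\geq 2$ for dimension reasons.

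There is essentially no obstacle here. The only point needing care is that $(x,t)$, with $t>0$, lies in the open subset $cX-o\cong X\times(0,\infty)$ where the cone is a genuine product, together with the bookkeeping that the chart is pointed at $(o,0)$.
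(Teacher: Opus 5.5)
Your proof is correct and is essentially the paper's argument: the paper just writes $\mathfrak{I}(cX,(x,t))=\mathfrak{I}(X\times(0,\infty),(x,t))\geq\mathfrak{I}(X,x)+1$ by Lemma \ref{invariant-under-products}, which is your alternative phrasing. Your explicit chart $\theta_x\times\sigma$ on $U_x\times(t/2,2t)$ simply unwinds the product-of-charts proof of that lemma.
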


\begin{proof}
Observe, since $t>0$, $\mathfrak{I}(cX,(x,t))= \mathfrak{I}(X\times (0,\infty),(x,t))\geq \mathfrak{I}(X,x)+1$. The last inequality holds in view of Lemma \ref{invariant-under-products}.

\end{proof}
\begin{exmp}
If $X$ is a manifold, then $\mathfrak{I}(cX,(x,t))= \mathfrak{I}(X,x)+1$ for all $x\in X$ and $t>0$. 

\end{exmp}

\begin{exmp}
In general, when $X$ is not a manifold, one does not necessarily have equality in the preceding lemma. More precisely, consider $c(\mathrm{Susp}(P))$, the (open) cone over the suspension of the Poincaré homology sphere $P$. Let $N$ be the north pole of $\mathrm{Susp}(P)$. Then, for $t>0$, such that $(N,t)$ is near, but not the vertex of a cone, $\mathfrak{I}(c(\mathrm{Susp}(P)),(N,t))= \mathfrak{I}(\mathrm{Susp}(P)\times (0,\infty),(N,t))$. Since $\mathrm{Susp}(P) \times (0,\infty)$ is a 5-manifold, the latter expression is simply $5$. However, $\mathfrak{I}(\mathrm{Susp}(P),N)=0$.

\end{exmp}

Finally, we conclude this section by mentioning the following result due to Quinn (see \cite{QuinnEndsofMaps,EndsofmapsIII,quinnresolutions,QuinnErratum}).

\begin{thm}
\label{Quinn's theorem}

Let $X$ be a metric space of (always assumed to be finite) dimension at least 4. The following are equivalent.

\begin{enumerate}
    \item $X$ has a resolution.
    \item $X\times \mathbb{R}^k$ is a manifold for some $k$.
    \item $X\times \mathbb{R}^2$ is a manifold.
\end{enumerate}
\end{thm}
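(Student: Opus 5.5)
Since this is a cited theorem of Quinn, I would not reprove it from scratch. Instead I would assemble it from the standard pillars of the theory of generalized manifolds, with $X$ implicitly understood to be a finite-dimensional ENR homology manifold, as in Quinn's setting. The implication $(3)\Rightarrow(2)$ is trivial: take $k=2$. So the plan is to prove $(2)\Rightarrow(1)$ and $(1)\Rightarrow(3)$.

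For $(2)\Rightarrow(1)$, suppose $X\times\mathbb{R}^k$ is a manifold.
\begin{enumerate}
\item $X$ is a retract of the manifold $X\times\mathbb{R}^k$, hence an ANR/ENR.
\item The Künneth formula for local homology, $H_*(X\times\mathbb{R}^k,X\times\mathbb{R}^k-(x,0))\cong H_{*-k}(X,X-x)$, shows that $X$ is a homology manifold of dimension $n=\dim(X\times\mathbb{R}^k)-k\geq 4$.
\item Invoke Quinn's local index $i(X)\in 1+8\mathbb{Z}$. It is locally constant, multiplicative under products, and equals $1$ exactly when a resolution exists: for $n\geq 5$ this is Quinn's ends-of-maps result, and for $n=4$ it uses the corrected four-dimensional statement of \cite{QuinnErratum}.
\item Multiplicativity gives $i(X\times\mathbb{R}^k)=i(X)\,i(\mathbb{R}^k)=i(X)$.
\item A genuine manifold has index $1$, so $i(X)=1$, and hence $X$ has a resolution.
\end{enumerate}

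For $(1)\Rightarrow(3)$, let $f\colon M\to X$ be a resolution, i.e.\ a proper cell-like surjection from a manifold $M$.
\begin{enumerate}
\item The map $f\times\mathrm{id}\colon M\times\mathbb{R}^2\to X\times\mathbb{R}^2$ is again a cell-like surjection from a manifold.
\item The target is a resolvable ENR homology manifold of dimension $n+2\geq 6$.
\item Apply Daverman's theorem that $X\times\mathbb{R}^2$ has the disjoint disks property whenever $X$ is a resolvable generalized manifold. The idea is that the two $\mathbb{R}$-factors let one push singular 2-disks apart: one works in the $\mathbb{R}$-coordinates to separate images, and uses the resolution to approximate maps into $X$ by ones avoiding the nondegenerate point-preimages in a controlled way.
\item Edwards' cell-like approximation theorem, valid in dimensions $\geq 5$, then says that $f\times\mathrm{id}$ is approximable by homeomorphisms.
\item Hence $X\times\mathbb{R}^2$ is a manifold.
\end{enumerate}

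The main obstacle is certainly the index theory in $(2)\Rightarrow(1)$, namely the fact that vanishing of the obstruction $i(X)-1$ actually produces a resolution. This is the deep controlled-topology input (the ends of maps machinery), and it is genuinely delicate in dimension $4$, which is why \cite{QuinnErratum} is cited. In this paper I would simply quote it. Second in difficulty is the disjoint disks property for $X\times\mathbb{R}^2$, which I would also quote from Daverman rather than reprove.
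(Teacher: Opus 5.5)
Your proposal is correct and is essentially the paper's treatment. The paper gives no argument of its own and simply quotes Quinn. Your assembly is exactly how that cited equivalence is derived in the literature: Quinn's index $i(X)\in 1+8\mathbb{Z}$ (locally constant, multiplicative, and equal to $1$ exactly when a resolution exists) gives $(2)\Rightarrow(1)$, and Daverman's disjoint disks property for $X\times\mathbb{R}^2$ together with Edwards' cell-like approximation theorem gives $(1)\Rightarrow(3)$.

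Your tacit assumption that $X$ is an ENR homology manifold does no harm to the statement as the paper phrases it (a bare finite-dimensional metric space). Condition $(2)$ forces it, as you show. Condition $(1)$ also forces it, because a finite-dimensional proper cell-like image of a manifold is an ANR, and it is a homology manifold by Vietoris--Begle.
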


For some applications of this theorem to Alexandrov geometry, see \cite{Wu}.
\section{Proofs and Consequences}

The following theorem establishes Theorem \ref{thm:main.theorem}. We will break the proof down into four parts. First, using the invariant introduced in Definition \ref{Definition: Index invarient}, we define the filtration as
$$
X^{(i)}=\{x\in X:\mathfrak{I}(X,x)\leq i \},
$$
and prove that each $X^{(i)}$ is closed. We then show the $i$-th stratum is an $i$-manifold. Next, we define the stratification on the links, and show that the MCS chart is an isomorphism. We now state the main theorem.

\begin{thm}
\label{main--thm}

Let $X$ be a MCS space. Then there  exists a filtration $\varnothing =X^{(-1)}\subseteq X^{(0)}\subseteq ...\subseteq X^{(N)}= X$  by closed subsets, such that the $i$-th stratum $X[i]$ is an $i$-dimensional manifold, and coincides with the $i$-th MCS stratum. Moreover the following holds: For  each $m$ and every $x\in X[m]$, there exists a neighbourhood $N_x$ of $x$ in $X$, a compact MCS space $L$, a stratification $(L^{(n)})_n$ of $L$ and a homeomorphism $\theta_x\colon N_x\rightarrow \mathbb{R}^m \times cL$ such that 
\[
\theta_x(N_x\cap X^{(k)})= \mathbb{R}^m\times c(L^{(k-m-1)})
\]
for every $k$ whenever defined.

Therefore, with the MCS stratification, $X$ is a CS set. Moreover, the MCS stratification coincides with the intrinsic stratification.

\end{thm}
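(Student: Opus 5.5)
I would follow the four-part outline announced just before the statement, with the filtration $X^{(i)}=\{x\in X:\mathfrak{I}(X,x)\leq i\}$. Since $X$ is MCS, every point has an MCS cone chart, so $\mathfrak{I}(X,x)\geq 0$. Finite dimension bounds $\mathfrak{I}(X,x)\leq n$: an $\mathbb{R}^m$ factor in $\mathbb{R}^m\times cL$ with $L$ MCS of dimension $n-m-1$ forces $m\leq n$, by induction on $n$ via the dimension of MCS spaces. Hence $X^{(n)}=X$.

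\textbf{Step 1 (closedness).} I would show that $\{x:\mathfrak{I}(X,x)\geq m\}$ is open. If $\theta_x\colon(U_x,x)\to(\mathbb{R}^m\times cL,o)$ is an associated homeomorphism and $y\in U_x$ is near $x$, write $\theta_x(y)=(v,z)$ with $z\in cL$. Translating in $\mathbb{R}^m$ as in the proof of Lemma \ref{not-manifold-cone}, and using that $cL$ is MCS so $z$ has a cone chart $(W,z)\approx(cL',o)$ (Lemma \ref{adjusting-lemma}), we get a chart $(\theta_x^{-1}(\mathbb{R}^m\times W),y)\approx(\mathbb{R}^m\times cL',o)$. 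Lemma \ref{preserved-under-open-sets} then gives $\mathfrak{I}(X,y)\geq m$. So $\mathfrak{I}$ is lower semicontinuous, and each $X^{(i)}$ is closed.

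\textbf{Step 2 (strata are manifolds).} Take $x\in X[m]$ with chart $\theta_x$ onto $\mathbb{R}^m\times cL$. I claim $\theta_x(U_x\cap X[m])=\mathbb{R}^m\times\{o\}$. Every point of $\mathbb{R}^m\times\{o\}$ has $\mathfrak{I}=m$, by translation and Lemma \ref{Lemma: I preserved under homeo's}. For a point $(v,z)$ with $z\neq o$, we have $z=(l,t)$ with $t>0$, and near it $cL\cong L\times(0,\infty)$ locally. Taking an MCS chart of $l$ in $L$ and using Lemma \ref{invariant-under-products}, $\mathfrak{I}\geq m+1$. Thus $X[m]$ is locally $\mathbb{R}^m$, i.e.\ an $m$-manifold, and this is exactly the MCS stratum, because points of the MCS $m$-stratum are precisely those with a chart $\mathbb{R}^m\times cL$ and no better one.

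\textbf{Step 3 (stratify links and check the isomorphism).} I would define $L^{(j)}=\{l\in L:\mathfrak{I}(X,\theta_x^{-1}(0,(l,1)))\leq j+m+1\}$. Since $\mathfrak{I}$ is constant along the rays $\mathbb{R}^m\times\{(l,t)\}$, $t>0$ (translation and radial rescaling are homeomorphisms of $\mathbb{R}^m\times cL$ fixing the product structure, so Lemma \ref{Lemma: I preserved under homeo's} applies), we get $\theta_x(U_x\cap X^{(k)})=\mathbb{R}^m\times c(L^{(k-m-1)})$ by construction. Closedness of $L^{(j)}$ follows from Step 1. Here one must be careful that $\mathfrak{I}(X,\cdot)$ computed in $X$ agrees with $\mathfrak{I}(\mathbb{R}^m\times cL,\cdot)$, which is Corollary \ref{preserved-under-local-homeos}. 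This gives the CS property. The links $L$ are already MCS from the chart.

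\textbf{Step 4 (intrinsic stratification) — main obstacle.} Handel's intrinsic dimension $I(X,x)$ allows arbitrary compact links, so a priori $I\geq\mathfrak{I}$, and Example \ref{invariants-don't-agree} shows they can differ for general spaces. For the reverse inequality, suppose $(U,x)\approx(\mathbb{R}^k\times cK,o)$ with $K$ an arbitrary compactum; I need an MCS link. My plan is a link replacement: take an MCS chart at $x$, $\mathbb{R}^m\times cL$. Comparing the two conical structures via Stallings/King uniqueness of open cone neighbourhoods (as in Lemma \ref{not-manifold-cone}), the point $x$ lies in a $k$-dimensional family of points all of whose neighbourhoods are homeomorphic to that of $x$. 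In the MCS chart, such points must have $\mathfrak{I}=m$ by Step 2 (points off $\mathbb{R}^m\times\{o\}$ have strictly larger $\mathfrak{I}$). So the set $\mathbb{R}^k\times\{o\}$ embeds into the $m$-manifold $X[m]$, giving $k\leq m$ by invariance of domain. Making this rigorous is the delicate point, because homeomorphism type of neighbourhoods must be shown to determine $\mathfrak{I}$, which is what Lemma \ref{Lemma: I preserved under homeo's} and Corollary \ref{preserved-under-local-homeos} provide. With $k\leq m$ in hand, the filtrations coincide, completing the proof.
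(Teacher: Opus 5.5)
Your proposal is correct and follows essentially the same route as the paper. The shared steps are: the filtration by $\mathfrak{I}$; lower semicontinuity via translation plus an MCS cone chart in the $cL$ factor; identification of the local stratum with $\mathbb{R}^m\times o$; the link filtration read off from $\mathfrak{I}$, which is constant along $\mathbb{R}^m\times\{l\}\times(0,\infty)$; and $I(X,x)\le\mathfrak{I}(X,x)$ by embedding $\theta^{-1}(\mathbb{R}^k\times o)$ into the $m$-manifold $X[m]$.

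The one superfluous ingredient is the appeal to Stallings/King cone uniqueness in Step 4. Translating in the $\mathbb{R}^k$ factor already gives a self-homeomorphism of the MCS open set $U$ carrying $x$ to any point of $\theta^{-1}(\mathbb{R}^k\times o)$, so homeomorphism invariance of $\mathfrak{I}$ alone finishes the step, exactly as in the paper.
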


\begin{remark}
Note that the link $L$ is an MCS space. Hence it admits a CS stratification, but it also admits the stratification $(L^{(n)})_n$.

\end{remark}

\begin{proof}
For each $i\geq 0$, put 
$$
X^{(i)}= \{x\in X: \mathfrak{I}(X,x)\leq i \}
$$

\textbf{Claim 1.} \label{claim 1} For each $i$, $X^{(i)}$ is closed in $X$. 

Notice that one of the difficulties is that $\mathfrak{I}$ is not continuous in general (see Example \ref{cone-over-rp^2}).  Hence the claim is not automatic. The following argument shows how to overcome this problem.

Put $m_p=\mathfrak{I}(X,p)$. Let $U_p,L_p, \theta_p$ be the data coming from the invariant. Namely, 
\[
\theta_p \colon (U_p,p)\rightarrow (\mathbb{R}^{m_p}\times cL_p,o)\] is the associated homeomorphism.  We shall show that for all $q\in U_p$,
\begin{equation}\label{Equation: I(X,q) geq I(X,p)}
\mathfrak{I}(X,q)\geq \mathfrak{I}(X,p).
\end{equation}

To that end, since $\theta_p(q)\in \theta_p(U_p)=\mathbb{R}^{m_p}\times cL$, there exist open sets $U_1$ and $U_2$, such that $\theta_p(q)\in U_1\times U_2\subseteq \theta_p(U_p)$. Since the cone over a compact MCS space is MCS, there exists an open set $U_2'$, a compact MCS space $L'$, and a homeomorphism  $\theta' \colon U_2'\rightarrow cL'$, such that $\theta_p(q)\in U_1\times U_2'\subseteq U_1\times U_2$, and such that if we write $\theta_p(q)=(q_1,q_2)$, then  $\theta'(q_2)=o$. Define $\tau \colon U_1\times U_2'\rightarrow \mathbb{R}^{m_p}\times cL'$  by  $\tau(v,z)=(v-q_1,\theta'(z))$.  Consider 
\[
\theta''= \tau \circ \theta_p \colon\theta_p^{-1}(U_1 \times U_2') \rightarrow \mathbb{R}^{m_p}\times cL'.\] Then, $\theta''(q)=o$. Hence, $\mathfrak{I}(X,q)\geq m_p=\mathfrak{I}(X,p)$. This argument shows that $X^{(i)}$ is closed in $X$. In more detail, observe that if $p\notin X^{(i)}$, then $\mathfrak{I}(X,p)>i$. Forthwith from \ref{Equation: I(X,q) geq I(X,p)} one deduces that for $q$ near $p$, $\mathfrak{I}(X,q)>i$. Whence, $X^{(i)}$ is closed in $X$.

\textbf{Claim 2.} For each $i$, the stratum $X[i]= X^{(i)}-X^{(i-1)}$ is an $i$-dimensional manifold without boundary.

Note for a general finite dimensional locally cone-like space, the ``intrinsic $i$-th stratum'' need not be an $i$-dimensional manifold (as Handel's example referred to in the introduction demonstrates). This suggests to prove the claim, one needs to use the MCS structure. We now proceed with the proof.

Let $p\in X[i]$. Then there exists a compact MCS space $L$, and a homeomorphism 
$$
\theta_p\colon (U_p,p)\rightarrow (\mathbb{R}^{i}\times cL,o).
$$
We will show that $\theta_p(U_p\cap X[i])= \mathbb{R}^i\times o$. To that end,  fix $q\in U_p\cap X[i]$. Write $\theta_p(q)=(q_1,q_2)$. If $q_2=o$ then the inclusion is immediate. Thus we will assume $q_2\neq o$, from which we shall obtain a contradiction: Since $L$ is MCS, there exists an open neighbourhood $U_1$ of $q_1$, $U_2'$ of $q_2$, a compact MCS space $L'$, and an  open embedding  $\theta_p^{-1}(U_1\times U_2')\rightarrow \mathbb{R}^{i+1}\times cL'$, that takes $q$ to $o$ (note that the dimension of the euclidean factor has increased by $1$). Since $q$ is assumed to be in $X[i]$, this gives a contradiction. 
All in all, we have shown $\theta_p(U_p\cap X[i])\subseteq \mathbb{R}^i\times o$.

We shall now establish the reverse inclusion. For that,  let $(x,o)\in \mathbb{R}^i\times o$. Then there exists a unique $q\in U_p$ so that $\theta_p(q)=(x,o)$. We must show that $q\in X[i]$.  Indeed, a completely analogous argument to \eqref{Equation: I(X,q) geq I(X,p)} yields $\mathfrak{I}(X,q)\geq \mathfrak{I}(X,p)=i>i-1$. So,  $q\notin X^{(i-1)}$.  Thus it remains to verify that $\mathfrak{I}(X,q)\leq i$. Assume $\mathfrak{I}(X,q)>i$. Define $\tau \colon \mathbb{R}^i\times cL \rightarrow \mathbb{R}^i \times cL$  by $\tau(v,z)=(v-x,z)$. Put $\theta_q= \tau \circ \theta_p$. Then, $\theta_q(q)=\tau(\theta_p(q))= \tau(x,o)=o$. This implies that $\mathfrak{I}(X,q)>i$ is an absurd statement. Hence the reverse containment follows, and with that, the equality $\theta_p(U_p\cap X[i])= \mathbb{R}^i\times o$ follows. Thus, the strata are indeed topological manifolds of the required dimension. 

Now, let $x\in X[m]$, let $h\colon \mathbb{R}^m\times cL\rightarrow X$ be the associated pointed open embedding. In particular, $h(o)=x$.

\textbf{Claim 3.} There exists a stratification $(L^{(n)})_n$ of $L$ such that $h$ is an isomorphism with respect to $(L^{(n)})_n$, where the stratification on $\mathbb{R}^m\times cL$ is the product stratification.

To prove the claim, first we define a stratification $(\mathbb{R}^m\times cL)^{(p)}$ by setting
$$
(\mathbb{R}^m\times cL)^{(p)}=h^{-1}(h(\mathbb{R}^m\times cL)\cap X^{(p)}).
$$
Observe that by Claim 1 and Claim 2, this indeed forms a stratification for $\mathbb{R}^m\times cL$. We will show that this stratification yields a stratification for $L$. Before doing so, we make the preliminary observation that by  Lemma \ref{preserved-under-open-sets}, and Corollary \ref{preserved-under-local-homeos}, $(\mathbb{R}^m \times cL)^{(p)}$ is precisely the set of points $(x,z)\in \mathbb{R}^m\times cL$ such that $\mathfrak{I}(\mathbb{R}^m\times cL,(x,z))\leq p$. Note that the latter expression is finite because $cL$ is MCS and therefore so is $\mathbb{R}^m\times cL$ (see Example \ref{Example: cone-over-mcs-is-mcs and product-of-mcs-is-mcs}).

Armed with the properties above, we are in a situation where we can adapt the arguments in \cite{handel}. For readers not familiar with the techniques in stratified space theory and in particular with the arguments in \cite{handel}, we shall provide a rather detailed proof on how to adapt Handel's arguments.
To that end, put $L[n]$ to be all the points $l\in L$ such that there exists an $r\in \mathbb{R}^m$ and $t>0$ such that $\mathfrak{I}(\mathbb{R}^m\times cL, (r,l,t))=m+n+1.$

Now we shall verify the following properties:

\begin{enumerate}
    \item $L[k]= \varnothing$ for $k<0,$ 
    \item The filters
    $$
    L^{(n)}= \bigcup_{s\leq n}L[s]
    $$
    are closed in $L$,
    \item The map $h$ satisfies $h(\mathbb{R}^m\times c(L^{(k-m-1)}))= h(\mathbb{R}^m\times cL)\cap X^{(k)}$ whenever defined.
\end{enumerate}

It is easy to see that these properties do indeed yield a stratification for $L$. To prove $(1)$, assume for the sake of obtaining a contradiction that $L[k]\neq \varnothing$ for some $k<0$. So there exists $l\in L$, $r\in \mathbb{R}^m$ and $t>0$ such that

\[\mathfrak{I}(\mathbb{R}^m\times cL,(r,l,t))=m+k+1<m+1.\]By Lemma \ref{Lemma: I preserved under homeo's},  for $r,r'\in \mathbb{R}^m$ and $t,t'\in (0,\infty)$,

\[\mathfrak{I}(\mathbb{R}^m\times cL,(r,l,t))= \mathfrak{I}(\mathbb{R}^m\times cL,(r',l,t')).\]

Consider $x_i=(r,l,\frac{1}{i})$. Then, $x_i\rightarrow (r,o)$. Note, $\mathfrak{I}(\mathbb{R}^m\times cL,x_i)=m+k+1$. So, $x_i\in (\mathbb{R}^m\times cL)^{(m+k+1)}.$ Hence, $(r,o)\in (\mathbb{R}^m\times cL)^{(m+k+1)}$. But, $(r,o)$ lives in $(\mathbb{R}^m\times cL)[m]$. Indeed, to see this, observe that $\mathfrak{I}(\mathbb{R}^m\times cL,(r,o))=\mathfrak{I}(X,x)=m$. So we cannot have $m+k+1<m$. So in particular, $m+k+1\geq m$. We shall now rule out the case $m+k+1=m$ and thus obtain the desired contradiction. Indeed, if $m+k+1=m$ then since $\mathbb{R}^m\times o\subseteq (\mathbb{R}^m\times cL)[m]$, it follows by invariance of domain (since both are manifolds of the same dimension) that $\mathbb{R}^m\times o$ is an open subset of $(\mathbb{R}^m\times cL)[m]$. Hence if $(r,o)\in \mathbb{R}^m\times o$, then for all large $i$, since $x_i \in (\mathbb{R}^m\times cL)[m+k+1]= (\mathbb{R}^m\times cL)[m]$, it follows that $x_i \in \mathbb{R}^m\times o$. This yields a contradiction. Hence $L[k]= \varnothing$ for $k<0$.

Now let us prove $(2)$. By $(1)$, for each $n\geq 0$, the union defining $L^{(n)}$ is finite. Let $l_i\in L^{(n)}$ such that $l_i\rightarrow l$ for some $l\in L$. To demonstrate closedness, we must show that $l\in L^{(n)}$. By definition of $L^{(n)}$, for each $i$, there exists $k_i\leq n$ such that $l_i\in L[k_i]$. Hence there exists $r_i\in \mathbb{R}^m$ and $t_i >0$ so that $\mathfrak{I}(\mathbb{R}^m\times cL, (r_i,l_i,t_i))=m+k_i+1$. By $(1)$, $k= \max(k_i)$ is well defined. Note, $\mathfrak{I}(\mathbb{R}^m\times cL,(r_i,l_i,t_i))= \mathfrak{I}(\mathbb{R}^m\times cL,(0,l_i,r_0))\leq m+k+1$, where $r_0>0$ is fixed. Hence $h(0,l_i,r_0)\rightarrow h(0,l,r_0)$. Thus, $h(0,l,r_0)\in X^{(m+k+1)}$. This follows from Claim 1, which asserts that each $X^{(i)}$ is closed in $X$. Now note, $\mathfrak{I}(X,h(0,l,r_0))=\mathfrak{I}(\mathbb{R}^m\times cL,(0,l,r_0))>m$. To summarize, $m<\mathfrak{I}(\mathbb{R}^m\times cL,(0,l,r_0))\leq m+k+1$, where $k\leq n$. Thus, $l\in L[j]$, for some $j\leq n$.

Point $(3)$ is routine and is essentially contained in the arguments above. For the reader's convenience, we supply the details: First observe that if $k=m$ then the proof of Claim 2 yields the equality

\[h(\mathbb{R}^m\times o)= h(\mathbb{R}^m\times cL)\cap X[m].\]

It remains to show that the latter expression is $h(\mathbb{R}^m\times cL)\cap X^{(m)}$. By definition of $X[m]$, one has $h(\mathbb{R}^m\times o)\subseteq h(\mathbb{R}^m\times cL)\cap X^{(m)}$. To establish the reverse containment, observe that if $\tilde{x}\in h(\mathbb{R}^m\times cL)\cap X^{(m)}$, then $\tilde{x}= h(\tilde{r},\tilde{l},\tilde{t})$.  We aim to show that $\mathfrak{I}(X,\tilde{x})=m$.  If $\mathfrak{I}(X,\tilde{x})\neq m$,
 then $\tilde{t}>0$, but then observe that if we denote $n= \mathfrak{I}(X,\tilde{x})$, then as in the proof of $(1)$ above, since $(\mathbb{R}^m\times cL)^{(n)}$ is closed in $\mathbb{R}^m\times cL$, it follows that the vertex must therefore live in $(\mathbb{R}^m\times cL)^{(n)}$. But this is impossible, since by definition of $h$, it must live in $(\mathbb{R}^m\times cL)[m]$. Hence the case $n\neq m$  cannot occur, and the only possibility is therefore $\mathfrak{I}(X,\tilde{x})=m$, which gives the equality we seek.

Thus it is now enough to consider the case $k>m$. Let $\tilde{x}\in h(\mathbb{R}^m\times c(L^{k-m-1}))$. Then, $\tilde{x}= h(\tilde{r},\tilde{l},\tilde{t})$. We may assume $\tilde{t}>0$ in what follows (because $X^{(m)}\subseteq X^{(k)}$). But now since $\tilde{l}\in L^{(k-m-1)}$, there exists an $0\leq s\leq k-m-1$  such that $\tilde{l}\in L[s]$. Note that the inequality $s\geq 0$ follows from $(1)$ above. Arguing as in the proof of $(1)$, one sees

\[\mathfrak{I}(X,\tilde{x})\leq m+s+1\leq k.\] This shows $h(\mathbb{R}^m\times c(L^{k-m-1}))\subseteq h(\mathbb{R}^m\times cL)\cap X^{(k)}.$ 

To show the reverse inclusion, again observe that if $\tilde{x}\in h(\mathbb{R}^m\times cL)\cap X^{(k)}$ and $\tilde{x}=h(\tilde{r},\tilde{l},\tilde{t})$, where $\tilde{t}>0$ then, $\mathfrak{I}(\mathbb{R}^m\times cL,(\tilde{r},\tilde{l},\tilde{t}))=m+j+1$ for some $j\leq k-m-1$ (note, since $\tilde{t}>0$, $\mathfrak{I}(\mathbb{R}^m\times cL, (\tilde{r},\tilde{l},\tilde{t}))>m$).

Thus, $\tilde{l}\in L[j]$. This shows that $X$ is a CS set with the MCS stratification.

\textbf{Claim 4.} The intrinsic stratification coincides with the MCS stratification.

Since $X$ is MCS, for any $x\in X$, consider the \emph{intrinsic dimension at $x$}, $I(X,x)$. We recall $I(X,x)$ is the supremum of all $m$ such that there exists a neighbourhood $U$ of $x$, a compact space $L$ and an open embedding $h\colon \mathbb{R}^m\times cL\rightarrow X$ onto a neighbourhood of $x$, for which $h(o)=x$. Note that in this case, $L$ is not assumed to be MCS. Before proceeding with the proof, we note that in general, for a locally cone-like space $Z$, $\mathfrak{I}(Z,z)$ need not coincide with $I(Z,z)$ (see Example \ref{invariants-don't-agree}).

In any case, since in the definition of $I(X,x)$, $L$ is merely a compactum, we see that $\mathfrak{I}(X,x)\leq \mathrm{I}(X,x)$.  We shall establish equality. Now we shall verify $\mathrm{I}(X,x)\leq \mathfrak{I}(X,x)$. To that end, let $m= \mathfrak{I}(X,x)$ and $i= \mathrm{I}(X,x)$ ($m$ is for MCS, and $i$ is for intrinsic). So, $x$ lives in the $m$-th MCS stratum. Let $\theta \colon (U_x,x)\rightarrow (\mathbb{R}^i\times cL,o)$ be a pointed homeomorphism, where $L$ is a compactum ($L$ is not necessarily MCS). Now to prove the claim, we must show that $\theta^{-1}(\mathbb{R}^i\times o)$ is contained in the $m$-th MCS stratum. Once we show this, it would then follow from our verification that $X$ is a CS set with respect to the filtration  $(X^{(n)})_n$ that $i\leq m$. This would then conclude the proof. Let $z\in \theta^{-1}(\mathbb{R}^i\times o)$. Note that although $L$ is not assumed to be MCS, $U_x$ is and therefore $\mathbb{R}^i\times cL$ is MCS. Hence

\[\mathfrak{I}(X,z)= \mathfrak{I}(U_x,z)= \mathfrak{I}(\mathbb{R}^i\times cL,\theta(z)).\]

But,

\[\mathfrak{I}(\mathbb{R}^i\times cL,\theta(z))= \mathfrak{I}(\mathbb{R}^i\times cL,o)= \mathfrak{I}(U_x,x)= \mathfrak{I}(X,x)=m.\]

All in all, this shows that $z$ lives in $X[m]$. Hence (since $X$ is a CS set with respect to the MCS stratification) $i\leq m$, and by what we have shown above, this implies  $i=m$, concluding the proof.

\end{proof}

\begin{remark}
\label{Cor: Links are resolvable homology manifolds} The strata $L[k]$ are always resolvable homology manifolds, provided the dimension of $L[k]$ is at least 4.  To show that $L[k]$ are resolvable homology manifolds, first observe that $L[k]$ is a manifold factor (a space $X$ is a manifold factor if $X\times \mathbb{R}^k$ is a manifold for some $k\geq 1$). Indeed, (using the notation in the proof of the preceding theorem), for a natural number $k$, $h(\mathbb{R}^m\times (cL)[k+1])=h(\mathbb{R}^m\times cL)\cap X[k+m+1].$ Note that $h$ is a homeomorphism onto its image.  Further, $\mathbb{R}^m\times (cL)[k+1]$ is $\mathbb{R}^m\times [(cL)^{(k+1)}-(cL)^{(k)}]$, which in turn is homeomorphic to $\mathbb{R}^m\times L[k] \times (0,\infty)$. Since the strata of $X$ are manifolds, it therefore follows that $L[k]$ is a manifold factor.  Hence invoking Quinn's theorem (Theorem \ref{Quinn's theorem}) gives us that $L[k]$ is a resolvable homology manifold whenever its dimension is at least 4. 
    
\end{remark}

Before proceeding, the following lemma is in order, for it is instructive. Namely, it illustrates the types of pathologies that can occur. Before stating the lemma, we mention that a space $X$ is said to be \textit{stably MCS} if $X\times \mathbb{R}^k$ is MCS for some $k\geq 1$.

\begin{lemma}\label{pathology}
\begin{enumerate}

    \item A compactum can be stably MCS but not MCS.
    \item There exists a compact resolvable homology manifold $L$ that does not admit a CS stratification but $\mathbb{R}^i\times cL$ is MCS for each $i$ (hence $L$ cannot be an MCS space).
\end{enumerate}

\end{lemma}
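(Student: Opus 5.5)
Both parts can be handled with one family of examples built from non-simply-connected homology spheres, combined with the double suspension theorem and the fact that a compact MCS space carries the CS stratification of Theorem \ref{main--thm}.

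For (1), let $P$ be the Poincaré homology sphere and put $K=\mathrm{Susp}(P)$; more generally, let $K$ be any non-manifold homology manifold which becomes a manifold after multiplying by $\mathbb{R}$. For $K=\mathrm{Susp}(P)$ this is the double suspension theorem: $\mathrm{Susp}(P)\times\mathbb{R}$ is a $5$-manifold, hence MCS. But $\mathrm{Susp}(P)$ is itself MCS, so I would instead modify the construction to kill the MCS property. The natural candidate is a compact homology manifold $L$ that is not locally cone-like at all, for instance a decomposition-space example: a non-manifold compact resolvable homology manifold $L$ with $L\times\mathbb{R}$ a manifold, of the kind produced by Bing/Daverman-type cell-like decompositions of $\mathbb{S}^n$ with non-degenerate elements along a Cantor set. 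Then $L\times \mathbb{R}$ is a manifold, so $L$ is stably MCS.

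To show $L$ is not MCS, I would use Theorem \ref{main--thm}. If $L$ were MCS, its non-manifold set would be a union of strata, each of which is a manifold. Each stratum would also be locally cone-like with links, so near any point of a non-manifold stratum of dimension $i$ we would have $\mathbb{R}^i\times cL'$ with $L'$ a homology sphere that is not a sphere. Choosing the decomposition so that its singular set is a Cantor set, which is not a manifold in any dimension (or alternatively is not locally cone-like: a Cantor set is not locally contractible at the decomposition points), gives the contradiction. Concretely, the singular set of an MCS homology manifold is a union of strata, so it is locally a finite union of manifolds and cannot be a Cantor set.

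For (2), take $L$ as in (1) but of the right dimension, and check that $\mathbb{R}^i\times cL$ is MCS for each $i$. Since $L\times\mathbb{R}$ is a manifold, $cL\setminus o\cong L\times(0,\infty)$ is a manifold. Hence $cL$ is a homology manifold with a single possibly singular point. Then $\mathbb{R}\times cL$ is a homology manifold whose singular set lies in $\mathbb{R}\times o$. The aim is to recognise $\mathbb{R}^i\times cL$, for $i\ge1$, as a manifold via Edwards' cell-like approximation theorem: it is resolvable, it has dimension at least $5$, and it has the disjoint disks property because of the $\mathbb{R}$-factor (the Daverman-type argument for codimension-one factors). For $i=0$, the cone $cL$ is MCS provided $L$ is, and $L$ is not, so I would restrict to $i\ge1$, or choose $L$ so that $cL$ is nevertheless a cone over a sphere. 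If $L$ is a non-manifold resolvable homology sphere with $L\times\mathbb{R}\cong \mathbb{S}^n\times \mathbb{R}$, then Kwun uniqueness of cone neighbourhoods gives $cL\cong\mathbb{R}^{n+1}$, which is MCS. Finally, $L$ admits no CS stratification: the strata of a CS set are manifolds, and $L$'s non-manifold set is a Cantor set, whose pieces cannot all be open manifolds of the appropriate dimensions.

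I expect the main obstacle to be producing $L$ with a wild (Cantor) singular set while still having $L\times \mathbb{R}$ a manifold, equivalently $cL$ a manifold. The first thing I would try is to cite Daverman's examples of non-manifold cell-like decompositions of spheres whose products with $\mathbb{R}$ are manifolds.
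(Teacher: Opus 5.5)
Your architecture matches the paper's: find a non-manifold cell-like image $L$ of a sphere with $L\times\mathbb{R}\approx\mathbb{S}^n\times\mathbb{R}$, show it is not CS, and conclude from the main theorem that it is not MCS. The gap is that the proposal never produces $L$, and its existence is the entire content of the lemma. You end by calling the construction ``the main obstacle'' and point vaguely at Bing/Daverman-type decompositions. You never fix one and verify for it the properties your argument uses: $L\times\mathbb{R}\approx\mathbb{S}^n\times\mathbb{R}$, $L$ not a manifold, and a singular set that is totally disconnected but not discrete.

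The paper fills exactly this hole with one concrete space, $L=\mathbb{S}^3/\alpha$ for the non-cellular Fox--Artin arc $\alpha$. For it, $L\times\mathbb{R}\approx\mathbb{S}^3\times\mathbb{R}$ and $\mathrm{Susp}(L)\approx\mathbb{S}^4$ are classical, so $\mathbb{R}^i\times cL\approx\mathbb{R}^{i+4}$ for all $i$, and non-CS comes from Henderson's results. This $L$ has a single singular point, so your Cantor-set requirement is self-imposed. It only serves your particular non-CS argument, and it makes the example harder to find. With an isolated singular point you need another obstruction. Henderson's results work, or in dimension $3$ an elementary observation: in a CS homology $3$-manifold every link is a homology manifold of dimension at most $2$ with the homology of a sphere, hence a sphere, so every point is a manifold point.

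There are also two smaller errors. First, the parenthetical ``$L\times\mathbb{R}$ a manifold, equivalently $cL$ a manifold'' is false: $P\times\mathbb{R}$ is a manifold but $cP$ is not. For part (2) you really need the stronger hypothesis $L\times\mathbb{R}\approx\mathbb{S}^n\times\mathbb{R}$. Second, from that hypothesis, $cL\approx\mathbb{R}^{n+1}$ does not follow from Kwun's theorem. Kwun compares two cone neighbourhoods of a single point, so it runs the other way. Instead, extend the homeomorphism over the two ends of $L\times\mathbb{R}$ (for $L$ connected and compact); this gives $\mathrm{Susp}(L)\approx\mathbb{S}^{n+1}$.

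Then $\mathbb{R}^i\times cL\approx\mathbb{R}^{i+n+1}$ for every $i\ge 0$ directly. The detour through Edwards' cell-like approximation theorem for $i\ge 1$ is unnecessary, and as written the disjoint disks step there is only asserted, not argued.
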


\begin{proof}
For both cases, the construction of the compactum is the same. Indeed, let $\alpha$ be the (non-cellular) Fox-Artin arc in $\mathbb{S}^3$ (see \cite[Corollary 2.10]{Moore}). In particular, $\mathbb{S}^3-\alpha$ is not simply connected. Then $\mathbb{S}^3/\alpha \times \mathbb{R}\approx \mathbb{S}^3\times \mathbb{R}$. The space $\mathbb{S}^3/\alpha$ is the desired example. Indeed, the suspension of $\mathbb{S}^3/\alpha$ is $\mathbb{S}^4$ (see \cite[p. 91]{decompositionsofmanifoldsdaverman} or \cite[Corollary 2.6.4]{embeddingsinmanifolds}). Thus $c(\mathbb{S}^3/\alpha)$ is homeomorphic to $\mathbb{R}^4$. Hence, for each $i$, $\mathbb{R}^i \times c(\mathbb{S}^3/\alpha)\approx \mathbb{R}^{i+4}$ is MCS, however, $\mathbb{S}^3/\alpha$ cannot admit a CS structure by the results of Henderson \cite{henderson}. 
\end{proof}

\begin{remark}For another reference regarding spaces appearing in the proof above and their properties, we refer the reader to \cite{embeddingsinmanifolds}. Though note, in that reference, what they call ``Fox-Artin arc'' is an arc $\alpha$ in $\mathbb{S}^3$ such that $\mathbb{S}^3-\alpha$ is simply connected, and so it is not the arc we are using above. In particular, in their example, $\alpha$ is cellular and hence by Bing-Shrinking, their resulting quotient space is homeomorphic to $\mathbb{S}^3$. For a further reference on Bing-Shrinking, we refer the reader to \cite{discembeddingtheorem}.

\end{remark}

\begin{exmp}
\label{GH-limits}

Note that spaces as in Lemma \ref{pathology}, (2), appear naturally as Gromov--Hausdorff limits. For example, since we have a cell-like map $\mathbb{S}^3\rightarrow \mathbb{S}^3/\alpha$, the proof of \cite[Corollary 2.10]{Moore} shows that there exists a sequence of metrics $d_i$ on $\mathbb{S}^3$ admitting a uniform contractibility function, and a metric $d$ on $\mathbb{S}^3/\alpha$, such that $(\mathbb{S}^3,d_i)\rightarrow_{GH}(\mathbb{S}^3/\alpha,d)$.

\end{exmp}

Note that Lemma \ref{pathology} shows that although $\mathbb{R}^i\times cL$ is MCS, $L$ may not be. However, the proof of the next lemma ensures that in our case, $L$ can always be replaced by an MCS space $L'$. More precisely, we have the following Lemma.

\begin{lemma}[Link Replacement Lemma]
    \label{replacement lemma}

Let $X$ be an MCS space, and let $x$ be a point in $X$. Assume $i$ is the maximal dimension for which there exists a neighbourhood $U$ of $x$, and a compactum $K$ and a pointed homeomorphism $\theta\colon (U_x,x)\rightarrow (\mathbb{R}^i\times cK,o)$.

Then there exist a compact MCS space $L$ and a pointed homeomorphism $(U_x,x)\rightarrow (\mathbb{R}^i\times cL,o)$.

\end{lemma}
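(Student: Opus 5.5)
The plan is to show that the maximal $i$ in the hypothesis is exactly $\mathfrak{I}(X,x)$; the MCS link is then supplied by the definition of $\mathfrak{I}$. By hypothesis, $i$ is the maximal $m$ admitting a pointed homeomorphism from a neighbourhood of $x$ onto $\mathbb{R}^m\times cK$ with $K$ an arbitrary compactum. This is precisely the intrinsic dimension $\mathrm{I}(X,x)$ used in Claim 4 of Theorem \ref{main--thm}.

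\textbf{Step 1: identify $i$ with $\mathfrak{I}(X,x)$.} Claim 4 of Theorem \ref{main--thm} proved $\mathrm{I}(X,x)=\mathfrak{I}(X,x)$ for every point of an MCS space. Hence $i=\mathfrak{I}(X,x)=:m$. Since $X$ is MCS, the supremum defining $\mathfrak{I}(X,x)$ is finite and attained, as used in Lemma \ref{invariant-under-products}. So there is a neighbourhood $V_x$, a compact MCS space $L$, and a pointed homeomorphism $\psi\colon (V_x,x)\rightarrow(\mathbb{R}^i\times cL,o)$.

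\textbf{Step 2: transfer the chart to $U_x$.} The statement asks for a homeomorphism defined on the given $U_x$, not merely on some neighbourhood. We have $U_x\approx\mathbb{R}^i\times cK$, which is an open cone on the compactum $\mathrm{Susp}^{i-1}$-type join $\mathbb{S}^{i-1}\ast K$ with vertex $x$. Similarly $V_x\approx\mathbb{R}^i\times cL\approx c(\mathbb{S}^{i-1}\ast L)$. After shrinking via Lemma \ref{adjusting-lemma}, $\theta$ restricts to a pointed homeomorphism from an open subset of $V_x$ onto $\mathbb{R}^i\times cK$, and vice versa. Thus the two open cone neighbourhoods of $x$ each embed openly in the other about the vertex.

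We then apply the uniqueness of open cone neighbourhoods (Kwun; Stallings' invertible cobordism argument as in \cite[Proposition 1]{king-henry}), exactly as in the proof of Lemma \ref{not-manifold-cone}. This yields a pointed homeomorphism $(\mathbb{R}^i\times cK,o)\approx(\mathbb{R}^i\times cL,o)$. Composing with $\theta$ gives the required $(U_x,x)\rightarrow(\mathbb{R}^i\times cL,o)$.

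\textbf{Main obstacle.} I expect Step 2 to be the hard part: checking that the cone-uniqueness theorem applies to these compacta. Finite dimensionality and metrizability of $\mathbb{S}^{i-1}\ast K$ and $\mathbb{S}^{i-1}\ast L$ are inherited from $X$. I would check the hypotheses exactly as they were used in Lemma \ref{not-manifold-cone}, where the same argument was already invoked for $\mathbb{R}^m\times cL$.

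If one is content with some neighbourhood rather than the given $U_x$, Step 1 alone suffices. Everything then rests on Claim 4, whose key input is that $\theta^{-1}(\mathbb{R}^i\times o)$ lies in a single MCS stratum, by the homeomorphism invariance of $\mathfrak{I}$ (Lemma \ref{Lemma: I preserved under homeo's}).
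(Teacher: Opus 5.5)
Your proposal is correct and follows essentially the same route as the paper. You identify the maximal $i$ with $\mathfrak{I}(X,x)$ via Claim 4 of Theorem \ref{main--thm} to get an MCS chart $(N_x,x)\to(\mathbb{R}^i\times cL,o)$, then transfer it to the given $U_x$ using uniqueness of open cone neighbourhoods (Kwun, Mazur) and compose; your preliminary mutual-embedding step via Lemma \ref{adjusting-lemma} is harmless but unnecessary, since the paper applies the cone-uniqueness theorem directly to the two open cone neighbourhoods $U_x$ and $N_x$ of $x$.
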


\begin{remark}
Again, the space $K$ need not be MCS in general, see Lemma \ref{pathology}.
\end{remark}
\begin{proof}
As the proof of the main theorem demonstrates, since $X$ is an MCS space, $\mathfrak{I}(X,x)=i$. Hence there exists a neighbourhood $N_x$ of $x$ and a compact MCS space $L$ and a pointed homeomorphism $\psi \colon (N_x,x)\rightarrow (\mathbb{R}^i\times cL,o)$. Now by uniqueness of cone neighbourhoods \cite{Kwun-Uniqueness-of-Cone-Neighborhoods} (see also Mazur's elegant generalization \cite{mazur-method-of-infinite-repitition}),  there exists a pointed homeomorphism $(U_x,x)\rightarrow (N_x,x)$. Thus the result follows by composing.

\end{proof}

\begin{exmp}
\label{invariants-don't-agree}
Once again, let $M$ be a manifold with a tame end $\epsilon$ so that $X=(M\cup \epsilon)\times \mathbb{R}$ is locally cone-like and is not the underlying space of a CS set. Then there exists an $x\in X$ so that $\mathfrak{I}(X,x)\neq I(X,x)$. Indeed, otherwise $X$ would be an MCS space. 
\end{exmp}

Now we prove Theorem \ref{odd-dimensional-MCS spaces}. First, we record the following result due to Wu \cite{Wu} (see also \cite{wu-generalizing-edwards-thm-cone-like}).

\begin{thm}[\cite{Wu,wu-generalizing-edwards-thm-cone-like}]

\label{wu's-manifold-recognition-theorem}
An $n$-dimensional Alexandrov space $X$ is a topological manifold if and only if the following two properties are satisfied.

\begin{enumerate}
    \item The space $X$ is a homology manifold.
    \item If $n\geq 4$, for each $p\in X$, the space of directions $\Sigma_p$ is simply connected. 
\end{enumerate}
\end{thm}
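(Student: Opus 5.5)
Necessity is the easy direction, and I would dispose of it first. A manifold is a homology manifold. For the condition on $\Sigma_p$, Perelman's conical neighbourhood theorem gives $(U_p,p)\approx(c\Sigma_p,o)$. If $X$ is a manifold, uniqueness of cone neighbourhoods \cite{Kwun-Uniqueness-of-Cone-Neighborhoods} gives $c\Sigma_p\approx\mathbb{R}^n$. Hence
\[
\Sigma_p\times\mathbb{R}\approx c\Sigma_p-o\approx\mathbb{R}^n-0\approx\mathbb{S}^{n-1}\times\mathbb{R},
\]
which is simply connected for $n\geq 3$. Therefore $\pi_1(\Sigma_p)=0$ whenever $n\geq 4$.

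For sufficiency, let $X$ be a homology $n$-manifold, and for $n\geq 4$ assume every $\Sigma_p$ is simply connected. I would first reduce to isolated singularities using the MCS/CS structure. Since $X$ is MCS, Corollary \ref{cor:wu} applies: $X\times\mathbb{R}$ is a manifold, the set of non-manifold points of $X$ is discrete, and the case $n\leq 3$ is settled directly by part (3), with $n\leq 2$ being classical. So I fix $n\geq 4$ and a point $p$, and it suffices to show that the cone neighbourhood $c\Sigma_p$ is a manifold at the vertex.

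Next I would record what is known about $\Sigma_p$. The punctured cone $c\Sigma_p-o\approx\Sigma_p\times(0,\infty)$ is an open subset of $X$, so $\Sigma_p\times\mathbb{R}$ is a homology manifold and, after shrinking $U_p$ so that $p$ is its only possible singular point, a genuine manifold. From
\[
H_*(X,X-p)\cong \tilde H_{*-1}(\Sigma_p)
\]
I get that $\Sigma_p$ has the homology of $\mathbb{S}^{n-1}$. Dividing out the $\mathbb{R}$ factor in local homology shows $\Sigma_p$ is itself a homology $(n-1)$-manifold.

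For $n=4$, $\Sigma_p$ is a $3$-dimensional Alexandrov space that is a homology manifold, hence a manifold by Corollary \ref{cor:wu}(3). It is a simply connected closed $3$-manifold, so by the Poincaré conjecture it is $\mathbb{S}^3$, and therefore $c\Sigma_p\approx\mathbb{R}^4$.

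For $n\geq 5$ I would apply the Edwards--Quinn recognition theorem to the homology manifold $c\Sigma_p$. It has a resolution by Theorem \ref{Quinn's theorem}: $c\Sigma_p\times\mathbb{R}$ is a manifold, because the only questionable points lie over the vertex, and there $c\Sigma_p\times\mathbb{R}\approx c(\mathrm{Susp}\,\Sigma_p)$ locally. It is therefore enough to check the disjoint disks property. Away from $o$ the space is a manifold, so DDP holds there. Near $o$, I would push the two given maps of $2$-disks off the vertex, using that
\[
c\Sigma_p-o\simeq\Sigma_p
\]
is simply connected, so that the vertex is $1$-LCC in $c\Sigma_p$. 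Then I would separate them in the manifold part by general position, which is available since $n\geq 5$.

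I expect this last step to be the main obstacle: verifying DDP at an isolated singular point from simple connectivity of the link, together with the claim that $c\Sigma_p\times\mathbb{R}$ is a manifold at the vertex. If the direct argument becomes delicate, I would instead invoke Cannon--Edwards' recognition of cones over simply connected homology spheres that are manifolds. To make that route work I would first show inductively that $\Sigma_p$ is a manifold, applying the same argument one dimension lower to $\Sigma_p$.
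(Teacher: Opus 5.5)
The paper does not prove this statement; it quotes it from Wu \cite{Wu} and uses it as a black box in the proof of Theorem \ref{odd-dimensional-MCS spaces}. There is therefore no internal proof to compare against.

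Judged on its own, your main line is sound. It is essentially the Edwards-type argument behind Wu's theorem:
\begin{itemize}
\item necessity via uniqueness of cone neighbourhoods;
\item reduction to an isolated singular point via Corollary \ref{cor:wu}, which the paper derives from the CS structure plus Henderson, so there is no circularity;
\item the case $n=4$ via Corollary \ref{cor:wu}(3) and the Poincar\'e conjecture;
\item for $n\ge 5$, Edwards' theorem, with the disjoint disks property obtained by pushing $2$-disks off the vertex and then using general position in the $n$-manifold $c\Sigma_p-o$.
\end{itemize}
The vertex is $0$- and $1$-LCC because every $c_r\Sigma_p-o\approx\Sigma_p\times(0,r)$ is connected and simply connected. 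Nothing Alexandrov-specific enters beyond the conical neighbourhood theorem. So the same argument gives the analogous statement for MCS homology manifolds with simply connected links.

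There are two corrections.

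\textbf{Resolvability.} Your justification that $c\Sigma_p$ is resolvable is not valid as written. Knowing that $c\Sigma_p\times\mathbb{R}$ is locally $c(\mathrm{Susp}\,\Sigma_p)$ near $(o,t)$ says nothing about it being a manifold there. The space $\mathrm{Susp}\,\Sigma_p$ is a simply connected homology sphere that is generally not a manifold. Deciding whether its cone is a manifold is the same recognition problem one dimension up, and running Edwards there would again require a resolution of $c\Sigma_p$. The argument is circular.

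The fix is immediate from what you already quoted. The space $c\Sigma_p$ is homeomorphic to an open subset of $X$, and $X\times\mathbb{R}$ is a manifold by Corollary \ref{cor:wu}(1). Hence $c\Sigma_p\times\mathbb{R}$ is a manifold and Theorem \ref{Quinn's theorem} applies. Alternatively, use Quinn's local index for homology manifolds with manifold points, as the paper does.

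\textbf{The fallback route.} Discard it. The space $\Sigma_p$ need not be a manifold even when $X$ is. Take $P$ the Poincar\'e sphere; then $\mathrm{Susp}^2(P)\approx\mathbb{S}^5$ is an Alexandrov space of curvature $\ge 1$. At points of its suspension circle, $\Sigma_p=\mathrm{Susp}(P)$, which is simply connected but not a manifold; the paper itself mentions this example.

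Your proposed induction also breaks because the hypothesis does not descend to $\Sigma_p$. The space of directions of $\mathrm{Susp}(P)$ at a pole is $P$, which is not simply connected. The DDP argument never needs $\Sigma_p$ to be a manifold, which is exactly why it is the right tool here.
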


First we need a lemma.

\begin{lemma}
\label{lemma-space-of-directions-orientable}
If $X$ is an $n$-dimensional Alexandrov space that is a homology manifold, then for each $p\in X$, $\Sigma_p$, the space of directions at $p$, is orientable.
\end{lemma}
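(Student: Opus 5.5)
We want to show: if $X^n$ is an Alexandrov space that is a homology manifold, then each space of directions $\Sigma_p$ is orientable. The plan is to reduce orientability of $\Sigma_p$ to a local homology computation at the cone point. The two standard facts we use are Perelman's conical neighbourhood theorem and the suspension isomorphism in homology.

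First, by Perelman's conical neighbourhood theorem (which underlies the MCS structure of Alexandrov spaces), a neighbourhood of $p$ is pointed homeomorphic to $(c\Sigma_p,o)$, where $\Sigma_p$ is a compact $(n-1)$-dimensional Alexandrov space. Excision and the long exact sequence of the pair $(c\Sigma_p, c\Sigma_p - o)$ give
\[
H_{k}(X,X-p)\cong H_k(c\Sigma_p,c\Sigma_p-o)\cong \tilde H_{k-1}(\Sigma_p),
\]
using that the cone is contractible and that $c\Sigma_p - o\simeq \Sigma_p$. Since $X$ is a homology $n$-manifold, we conclude $\tilde H_*(\Sigma_p)\cong \tilde H_*(\mathbb{S}^{n-1})$ with integer coefficients. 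In particular $H_{n-1}(\Sigma_p;\mathbb{Z})\cong\mathbb{Z}$.

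Next, $\Sigma_p$ is itself a homology $(n-1)$-manifold. For $v\in\Sigma_p$, take the point $(v,1)\in c\Sigma_p-o\approx \Sigma_p\times(0,\infty)$, which is an open subset of $X$. By the Künneth formula for local homology,
\[
H_*(\Sigma_p\times\mathbb{R},\Sigma_p\times\mathbb{R}-(v,0))\cong H_{*-1}(\Sigma_p,\Sigma_p-v).
\]
The left side is that of $\mathbb{R}^n$, so the right side is that of $\mathbb{R}^{n-1}$. This uses only that $\Sigma_p$ is finite-dimensional, compact and metrizable, all of which hold. The space $\Sigma_p$ is also connected for $n\geq 2$, being an Alexandrov space of dimension at least 1; the case $n=1$ is trivial, since then $\Sigma_p$ is two points.

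Finally, a compact connected homology $(n-1)$-manifold $M$ is orientable exactly when $H_{n-1}(M;\mathbb{Z})\cong\mathbb{Z}$, i.e.\ when it admits a fundamental class. This holds by the standard orientation theory for homology manifolds, via the orientation sheaf $\mathcal{H}_{n-1}$. If instead one uses the Alexandrov-specific notion of orientability (Harvey--Searle, Mitsuishi), it is defined through the MCS structure and is shown to be equivalent to $H_{n-1}(\Sigma_p;\mathbb{Z})\cong\mathbb{Z}$ for such spaces, by Mitsuishi's fundamental class results. Combining this with the previous two paragraphs gives the lemma.

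The main obstacle is this last identification. One has to state precisely which definition of orientability is intended and check that the homological criterion applies to $\Sigma_p$. Since $\Sigma_p$ is shown above to be a homology manifold, I would invoke the sheaf-theoretic criterion directly: the orientation sheaf is locally constant with stalk $\mathbb{Z}$, and a global section generating every stalk exists iff $H_{n-1}(\Sigma_p)\cong\mathbb{Z}$ for connected compact $\Sigma_p$.
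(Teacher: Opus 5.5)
Your proposal is correct and follows the paper's route. The paper cites Wu's Proposition 3.1 for $H_*(\Sigma_p)\cong H_*(\mathbb{S}^{n-1})$, which is exactly your excision/cone computation, and then Mitsuishi's Theorem 1.8 to pass from $H_{n-1}(\Sigma_p;\mathbb{Z})\cong\mathbb{Z}$ to orientability. Your extra step showing that $\Sigma_p$ is a homology manifold, together with the orientation-sheaf criterion, is sound but only yields orientability in the homology-manifold sense; for the Alexandrov notion used downstream (Petrunin's Synge theorem), Mitsuishi's result is the essential input, just as in the paper.
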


\begin{proof}By \cite[Proposition 3.1]{Wu}, $H_{*}(\Sigma_p)\cong H_{*}(\mathbb{S}^{n-1})$. Since $\Sigma_p$ is a compact Alexandrov space, by \cite[Theorem 1.8]{mitsuishi2016orientabilityfundamentalclassesalexandrov}, $\Sigma_p$ is orientable.

\end{proof}

\begin{exmp}
The previous lemma is not true for general Alexandrov spaces. Indeed, consider $X= c(\mathbb{R}P^2)$. At the vertex $o$, $\Sigma_o= \mathbb{R}P^2$ which is not orientable. In this case, $X$ is not a homology manifold.    
\end{exmp}

Armed with the preceding lemma, we now give a proof to Theorem \ref{odd-dimensional-MCS spaces}.

Before proceeding with the proof we offer a few words on how the idea for the second part arose. In general, to construct an Alexandrov homology manifold that is not a manifold, the standard way is to  look at the suspension over the Poincaré homology sphere. Thus our idea is to investigate the different types of homology spheres that appear, then take their suspensions.

\begin{proof}[Proof of Theorem \ref{odd-dimensional-MCS spaces}]

If $n\leq 3$ then every $n$-dimensional Alexandrov space that is a homology manifold is a manifold.  For $n<3$, this is classical and in fact, for these dimensions, every Alexandrov space is a manifold. For $n=3$, this can be derived through several means, either from the results of Wu \cite{Wu,wu-generalizing-edwards-thm-cone-like}, Fujioka \cite{fujioka2024alexandrov} or our Corollary \ref{cor:wu}. So, we may assume that $n\geq 5$ (the case $n=4$ is irrelevant to the statement of our theorem). To that end, let $X^n$ be an odd-dimensional Alexandrov space that is a homology manifold. By Lemma \ref{lemma-space-of-directions-orientable}, $\Sigma_p$ is orientable. Moreover, $\Sigma_p$ is also even-dimensional and has curvature bounded below by $1$. Therefore by Synge's theorem for Alexandrov spaces \cite[Theorem 2.1]{Petrunin-Parallel-Transport}, $\Sigma_p$ is simply connected and by Wu's theorem, Theorem \ref{wu's-manifold-recognition-theorem}, $X$ is a manifold. This concludes the first part of the result. 

For the second part, let $M$ be an $2n\geq4$ dimensional closed homology sphere with $\pi_1(M)$ infinite. Such a homology sphere can be constructed, for example, using \cite[Theorem 1]{kervaire} and \cite[p. 68]{kervaire} applied to the Higman group \cite{higman}. As observed by Perelman \cite[P. 6]{perelman1991alexandrov}, the join of two compact MCS spaces is an MCS space. Since, $\mathrm{Susp}(M)= M\ast \mathbb{S}^0$, it follows that $\mathrm{Susp}(M)$ is an odd dimensional MCS space. Since $\pi_1\neq 0$, $\mathrm{Susp}(M)$ is not a manifold. Furthermore, since $M$ is a homology sphere, $\mathrm{Susp}(M)$ a simply connected homology manifold. Note that by the first part of the result, it cannot be homeomorphic to an Alexandrov space.

Now we prove the last part of the result. First assume $X^5$ is an compact simply connected compact MCS space that is a homology manifold. By Theorem \ref{thm:main.theorem}, $X$ admits a CS stratification. Hence by \cite[p. 142]{siebenmann1972deformation}, $X$ is an ENR (a Euclidean Neighborhood Retract). Therefore by \cite[Theorem 2]{henderson}, $X$ can be decomposed into a manifold and singular points. Since $X$ is a homology manifold with manifold points, it follows by work of Quinn \cite[Theorem 1.1]{obstructiontoresolutionofhomologymanifolds} that $X$ is a resolvable homology manifold. Now the conclusion follows as explained in the discussion following \cite[Corollary 4]{alattar-tadman-regularity} (with the volume bound condition omitted). Since the argument is not too complicated, we recall it for the reader's convenience. Observe that $H_1(X;\mathbb{Z}_2)=0$. Hence, $\Delta(X)=0$ (here $\Delta(X)$ is from \cite[Corollary 4]{alattar-tadman-regularity} and is the Kirby-Siebenmann invariant of a manifold resolving $X$). In dimension 5, every PL-manifold admits a smooth structure (see for example \cite[Theorem 2]{Milnor}). Therefore by the work of Ferry and Okun \cite{ferryandborus}, we see that we have a sequence $(M_i^5,d_{g_i})$ with a uniform contractibility function such that $(M_i^5,d_{g_i})\rightarrow_{GH}(X,d)$.  It remains to establish the lower volume bound condition. This, however follows from, for example \cite{gromov-filling-riemannian-manifolds} (see the main theorem in \cite{littletop-petersen-greene} for a more general result).

Now, let $X^n$ be a compact MCS space that is a homology manifold and is homotopy equivalent to a sphere. Let $d$ be a geodesic metric on $X$. By Theorem \ref{main--thm}, $X$ is a CS set. Since $X$ is a resolvable homology manifold, there exists a manifold $M$ and a cell-like map $M\rightarrow X$. In particular, $M$ is homotopy equivalent to $X$. Therefore, $M$ is homotopy equivalent to $\mathbb{S}^n$. By the Poincaré conjecture, $M$ is homeomorphic to $\mathbb{S}^n$. Hence, $X$ is the cell-like image of $\mathbb{S}^n$. Thus the result follows as above.

\end{proof}
\begin{remark}
The first part of the result relies on Petrunin's work \cite{Petrunin-Parallel-Transport} on generalizing Synge's theorem to Alexandrov spaces (see also \cite{HarveySearle}). For the lower Ricci curvature bound setting, we would like to point out that in \cite{syngestheoremforriccicurvature}, Wilhelm obtained several  interesting generalizations of Synge's theorem to manifolds with lower Ricci curvature, and intermediate Ricci curvature bounds with $1$-systole conditions.
\end{remark}

Now we prove Corollary \ref{sphere-theorem}.
\begin{proof}{}
Choose $p,q\in X$ such that $|p,q|= \mathrm{diam}(X)$. By Perelman's suspension theorem \cite[Theorem 4.5]{perelman1991alexandrov}, $X$ is homeomorphic to $\mathrm{Susp}(\Sigma_p)$, where $\mathrm{Susp}(\Sigma_p)$ denotes the suspension over the space of directions at $p$. Moreover, $\Sigma_p$ is a homology sphere and is a homology manifold \cite[Proposition 3.1]{Wu}.

Case $\mathrm{I}.$ $\mathrm{dim}(X)$ is odd. 

In this case, by Theorem \ref{odd-dimensional-MCS spaces}, $X$ is a manifold. It is well known that if the suspension is a manifold, it must be a sphere.

Case $\mathrm{II}$. $\mathrm{dim}(X)$ is even.

In this case, $\Sigma_p$ is an odd dimensional homology manifold that is a homology sphere. Thus by Theorem \ref{odd-dimensional-MCS spaces}, it is a manifold.

\end{proof}

\printbibliography
\end{document}